\documentclass[11pt,a4paper]{article}
\usepackage{a4wide,amsmath,amsthm,tikz}  
\usepackage{amsfonts,amssymb}
\usepackage{color}
\usepackage{thm-restate}
\usepackage{comment}

\theoremstyle{plain}
\newtheorem{theorem}{Theorem}
\newtheorem{lemma}[theorem]{Lemma}
\newtheorem{claim}[theorem]{Claim}
\newtheorem{question}[theorem]{Question}

\def\Evec{\vec E}
\def\vecE{\Evec}
\def\Gvec{\vec G}

\def\calI{\mathcal{I}}

\def\nn{\sqrt{n}}
\def\deg{\mathrm{deg}}

\newcommand{\Exp}{\mathop{\null\mathbb{E}}}
\let\rho=\varrho 
\newcommand{\Ptilde}{\widetilde P}
\let\epsilon=\varepsilon
\let\eps=\varepsilon
\renewcommand{\Pr}{\mathbb{P}}

\title{On the self-intersection time of non-backtracking random walks}
\author{Ferenc Bencs\thanks{Centrum Wiskunde \& Informatica, Amsterdam. \texttt{ferenc.bencs@cwi.nl}}, Leslie Ann Goldberg\thanks{Department of Computer Science, University of Oxford.}, Matthew Jenssen\thanks{King's College London, Department of Mathematics. \texttt{matthew.jenssen@kcl.ac.uk} }, Mark Jerrum\thanks{School of Mathematical Sciences, Queen Mary, University of London, UK\@. \texttt{m.jerrum@qmul.ac.uk}},  G\'abor Pete\thanks{HUN-REN Alfréd Rényi Institute of Mathematics, and Department of Stochastics, Institute of Mathematics, Budapest University of Technology and Economics, Hungary. \texttt{gabor.pete@renyi.hu}},\\ Guus Regts\thanks{Korteweg de Vries Institute for Mathematics, University of Amsterdam. \texttt{g.regts@uva.nl}}, and Yitong Yin\thanks{School of Computer Science, Nanjing University, China. \texttt{yinyt@nju.edu.cn}}}
\date{10 Aug 2026}

\begin{document}

\maketitle

\begin{abstract}
We study the self-intersection time of the non-backtracking random walk on connected undirected graphs. For every fixed $\Delta \geq 3$ we show that the expected self-intersection time is $O(\sqrt{n} \log n)$ on $n$-vertex graphs with minimum degree at least~$3$ and maximum degree at most~$\Delta$. For regular graphs with a uniform spectral gap, we improve this to $O(\sqrt{n})$. We also show an $\Omega(\sqrt{n})$ lower bound on a class of regular expanders. Our upper bound on the expected self-intersection time implies an improved mixing time bound on Glauber dynamics for the Ising model on $\Delta$-regular graphs at the tree uniqueness threshold.
\end{abstract}

\section{Introduction}

Let $G$ be a connected $n$-vertex graph with minimum degree $\delta\ge 3$ and maximum degree~$\Delta$.
A \emph{non-backtracking random walk} (NBW) on~$G$ 
from a vertex $u\in V(G)$ 
is a process $(X_t)_{t\geq 0}$ with $X_0=u$ defined as follows.
\begin{itemize}
\item At time $t=1$, $X_1$ is chosen uniformly from the set $N(X_0)$ of neighbours of~$X_0$.
\item For each $t\geq 2$, $X_t$ is chosen uniformly from $N(X_{t-1}) \setminus X_{t-2}$.
\end{itemize}
The \emph{self-intersection time} of the NBW is defined 
as \[T(G,u):= \min \{ t\geq 2 \colon 
\mbox{for some $j\in \{0,\ldots,t-1\}$, } 
X_t = X_j \}.\]

Alon, Benjamini, Lubetzky, and Sodin~\cite{ABLS} studied NBWs on regular expanders. (By convention, an ``expander'' is a vertex expander, and a family of expanders has expansion factor bounded uniformly away from 1.) 
The non-backtracking walk is not a Markov chain when viewed only on vertices, since the law of the next step depends on the previous vertex. However, as they observed, it is a Markov chain on directed edges. 
Indeed, suppose (for now) that $G$ is non-bipartite and $\Delta$-regular and let
$
\vec E:=\{(x,y):\{x,y\}\in E(G)\}
$
be the set of directed edges, so that
\(
N:=|\vec E|=n\Delta.
\) Let $\Gvec$ be the digraph with vertex set~$V(G)$ and edge set~$\vecE$.
For \(t\ge 1\), define
$
Y_t:=(X_{t-1},X_t)$.
Then \((Y_t)_{t\ge 1}\) is a Markov chain on \(\vec E\), with transition
matrix
\[
\vec P((x,y),(y,z))
=
\begin{cases}
\dfrac1{\Delta-1}, & \text{if }  z\neq x,\\[4pt]
0, & \text{otherwise}.
\end{cases}
\]

Since \(G\) is \(\Delta\)-regular, \(\vec P\) is doubly stochastic, hence its stationary
distribution is uniform on \(\vec E\). Moreover, since \(G\) is connected,
non-bipartite, and \(\Delta\ge 3\), this directed-edge chain converges to its stationary distribution. It follows that $X_t$ also converges to the uniform distribution.

Alon et al.~\cite{ABLS} define the mixing 
rate of $(X_t)$ as follows:
\begin{equation}\label{eq:intro-mixrate}
\tilde\rho(G)=\limsup_{t\to\infty}\max_{u,v\in V(G)}\Bigl|\Ptilde_{uv}^{(t)}-\frac1n\Bigr|^{1/t},
\end{equation}
where $\Ptilde^{(t)}_{uv}$ is the probability that an NBW of length~$t$ on~$G$, which starts at~$u$, ends at~$v$. Let 
$
\textrm{Spec}(A_G)
$
denote the set of eigenvalues of the adjacency matrix $A_G$ of $G$ and let 
\[
\lambda(G):=\max \{|\lambda|: \lambda\in \textrm{Spec}(A_G), |\lambda|\not=\Delta\}
\]
be the largest non-trivial eigenvalue in absolute value. 
They show that the NBW mixing rate is bounded away from 1 by a function of~$\lambda(G)$ and~$\Delta$. In particular, for an absolute expander, $X_t$ will be very close to being uniformly distributed in $O(\log n)$ steps. Combining this with the birthday paradox, which says that, when balls are thrown u.a.r.\null{} into $n$ bins, we expect a collision after $\Theta(\sqrt{n})$ balls, it is easy to conclude that $\Exp(T(G,u))=O(\sqrt{n}\log n)$.  
In this work, we obtain a similar upper bound for general bounded-degree graphs and a stronger, asymptotically tight bound for expanders.   

Our main theorem is Theorem~\ref{thm:UB}, which is an $ O(\sqrt{n}\log n)$ upper bound on the expected self-intersection time of non-backtracking walks in bounded-degree graphs.

\begin{restatable}{theorem}
{thmUB}\label{thm:UB}
Fix $\Delta \geq 3$ and let $G$ be a connected $n$-vertex graph with minimum degree at least~$3$ and maximum degree~$\Delta$. Fix $u\in V(G)$. Then $\Exp(T(G,u)) = O_\Delta(\sqrt{n} \log n)$.
\end{restatable}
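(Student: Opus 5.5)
The plan is to run the birthday-paradox argument in blocks of length $L:=\lceil C\log n\rceil$, with $C=C(\Delta)$ chosen later, replacing the spectral mixing input used for expanders by a structural fact that holds for every graph with minimum degree at least~$3$. The starting point is that the directed-edge chain $Y_t=(X_{t-1},X_t)$ is still a Markov chain when $G$ is not regular. Its transition probability is $\vec P((x,y),(y,z))=1/(\deg(y)-1)$ for $z\neq x$. Each column sum is $1$, since the $\deg(y)-1$ edges $(x,y)$ with $x\ne z$ all feed into $(y,z)$ with weight $1/(\deg(y)-1)$. So $\vec P$ is doubly stochastic and the uniform measure on $\vec E$ is stationary, with $N=|\vec E|\le\Delta n$.

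Moreover, the reversal map $(x,y)\mapsto(y,x)$ conjugates $\vec P$ to its time-reversal with respect to the uniform measure. Hence a non-backtracking path traversed backwards has exactly the same weight $\prod 1/(\deg-1)$, up to the first and last factors, which are within a factor $\Delta$ of each other. I will use this symmetry in place of mixing.

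The key intermediate statement I aim for is the following. Condition on $T>t$ and on the history $X_0,\dots,X_t$, which is a self-avoiding path $\gamma$ of $t+1$ vertices. Then the probability that the next $L$ steps still avoid $\gamma$ (and themselves) is at most $1-c\,t/n$ for some $c=c(\Delta)>0$, at least on average over histories weighted by their probabilities. Granting this, chaining over blocks gives
\[
\Pr\bigl(T>kL\bigr)\le\prod_{j<k}\Bigl(1-\frac{c\,jL}{n}\Bigr)\le \exp\Bigl(-\frac{c L k^2}{2n}\Bigr).
\]
Summing over $k$ gives $\Exp(T)\le L\sum_k \Pr(T>kL)=O(L\sqrt{n/L})=O(\sqrt{nL})$, which is $O(\sqrt n\log n)$ and even better. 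If the lemma only holds with the weaker rate $c\,t/(nL)$, the same chaining still gives exactly $O(\sqrt n\log n)$. For the averaged version, I will write $\Pr(T>(k+1)L)$ as a sum over self-avoiding paths and apply the bound path by path inside the sum.

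To prove the hitting lemma, I would compare the forward continuation from the tip of $\gamma$ with $\gamma$ read backwards from its tip. By the reversal symmetry, the last $L$ edges of $\gamma$ reversed have, in law, the same structure as a fresh NBW of length $L$ started from the tip, up to factors $\Delta^{O(1)}$. So the event that the continuation hits $\gamma$ is comparable to a collision event between two NBWs started from the same directed edge. For this collision event, Cauchy--Schwarz applied to the walk distributions gives a lower bound of the form $\sum_e \mu(e)^2\ge 1/|\mathrm{supp}\,\mu|$. I then need to convert this into a lower bound proportional to $t/n$. The minimum-degree-$3$ hypothesis enters through a counting bound: every non-backtracking path of length $\ell$ has probability at most $2^{-\ell}$. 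Consequently, within $L\ge 2\log_2 n$ steps, the walk cannot spread mass thinly over more than $n$ vertices. Either it concentrates, which forces collisions with its own recent past, or it has already reached a stationary-like spread, in which case the doubly stochastic property shows that it places mass at least $\approx t/N$ on the $t$ vertices of $\gamma$.

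The main obstacle is exactly this hitting lemma: making the dichotomy between "concentrated" and "spread" rigorous without any expansion. A concrete worry is a long chain of small gadgets, where the walk is far from uniform for $\mathrm{poly}(n)$ time, so the lemma cannot hold pointwise from an arbitrary start. My first attempt would therefore be the averaged version. I would average over the random time at which each block starts, and exploit that the history $\gamma$ was itself generated by the same doubly stochastic chain. The aim is that the sum of collision probabilities over the blocks telescopes against the uniform stationary measure. The expected $\log n$ loss comes from the $2^{-L}\le n^{-2}$ counting threshold.
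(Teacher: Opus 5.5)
\textbf{There is a genuine gap: the hitting lemma is not proved, and the route you sketch for it cannot work.} Everything rests on that lemma. Your preliminary facts are correct: $\vec P$ is doubly stochastic for non-regular $G$ as well, and the reversal $(x,y)\mapsto(y,x)$ conjugates $\vec P$ to $\vec P^{\mathsf T}$. But these facts say nothing about where the walk is $L$ steps after a given directed edge.

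\begin{itemize}
\item To get a hit probability $\ge ct/n$, the next $L$ steps must put mass proportional to $t/n$ on the vertices of $\gamma$. That is an equidistribution statement, and equidistribution is exactly what fails without expansion.
\item ``Spread out'', in the sense that every path has mass at most $2^{-L}$, does not mean ``stationary-like''. The mass can be spread over a region containing few vertices of $\gamma$; for example, take two expanders joined by a single edge, with $\gamma$ having just crossed it. Invariance of the uniform measure gives no information about the law at time $L$ from a fixed start.
\item The bound $\sum_e\mu(e)^2\ge 1/|\mathrm{supp}\,\mu|$ lower-bounds a same-time collision of two independent copies of the same law. Hitting $\gamma$ instead involves overlaps between different laws at different times, and those overlaps can vanish. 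So no factor $t$ comes out.
\item The comparison with $\gamma$ reversed is also unavailable. Given the history, $\gamma$ is a fixed path. Averaged over histories, its reversal is still not a fresh NBW from the tip: it ends at the fixed vertex $u$, it is conditioned on self-avoidance, and it leaves the tip along $(X_t,X_{t-1})$ rather than continuing from $(X_{t-1},X_t)$.
\item For the averaged version you give no mechanism. Moreover, ``applying the bound path by path inside the sum'' needs exactly the pointwise statement you concede is false. What the chaining actually needs is an average with respect to the conditional law given $T>kL$.
\item Your lemma would give $O(\sqrt{n\log n})$. That is stronger than the theorem and close to the $O(\sqrt n)$ bound the paper leaves open as a question, so it should not be expected to be a routine step.
\end{itemize}

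\textbf{The missing idea} is to stop looking for a hit probability that grows linearly in $t$. Instead, bound deterministically, along any NBW, the number of positions at which a quick self-intersection is unlikely. This is what the paper does.
\begin{itemize}
\item From each directed edge it grows the tree of NBW continuations, stopped once the prefix mass drops to about $1/\sqrt n$. The depth is $O(\log n)$, since minimum degree $3$ makes every step have probability at most $1/2$.
\item A position of a path $P$ is ``safe'' if, with probability at least $1/2$, the next tree's leaf followed by any node of one further tree avoids self-intersection. At an unsafe position the walk self-intersects within about two trees with probability at least $1/(2d\sqrt n)$.
\item The key lemma is that any $P$ has at most $C\sqrt n$ safe positions. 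Each safe tree carries at least of order $\sqrt n$ nodes of mass about $d/\sqrt n$.
\item Take two such nodes, from safe trees, ending at the same vertex $w$ whose labels are not suffix-comparable. Reversing the earlier label back from the start of their common suffix, and appending it to the later label's prefix, gives a node of a follow-up tree that returns to $P$. This contradicts safety. So the labels are suffix-comparable, and the mass window then leaves at most $\lceil\log_2 d\rceil$ of them per vertex.
\item Pigeonhole over the $n$ vertices bounds the number of safe positions. This yields $O(\sqrt n)$ trees of $O(\log n)$ steps each.
\end{itemize}
Your reversal-symmetry intuition is the right ingredient. It has to be used pathwise in this splicing-and-counting argument, not through a probabilistic collision estimate.
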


For regular graphs with a uniform spectral gap,
Theorem~\ref{thm:UBexpander} improves the bound on the expected self-intersection time to $O(\sqrt{n})$.

\begin{restatable}{theorem}{thmUBexpander}
\label{thm:UBexpander}
Fix \(\Delta\ge 3\), \(\varepsilon>0\) and let \(G\) be a connected $n$-vertex, \(\Delta\)-regular graph with 
$
\lambda(G)\le \Delta
-\varepsilon$.
Fix $u\in V(G)$. Then 
$\Exp(T(G,u)) \leq O_{\Delta,\eps}(\sqrt{n})$.
\end{restatable}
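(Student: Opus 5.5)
We bound the tail $\Pr(T(G,u)>t)$ by a second-moment/birthday-type argument applied to the directed-edge chain, and then sum it. Set $L:=C\log n$, where the constant $C=C(\Delta,\eps)$ is chosen so that the non-backtracking walk mixes in $L$ steps. From $\lambda(G)\le\Delta-\eps$ and the Alon--Benjamini--Lubetzky--Sodin estimate for the NBW mixing rate $\tilde\rho(G)$ in terms of $\lambda(G)$ and $\Delta$, we get $\tilde\rho\le 1-c(\Delta,\eps)<1$. We will need this in a quantitative, finite-$t$ form: for every starting directed edge $e$ and every vertex $v$,
\[
\Bigl|\Pr(X_{s}=v\mid Y_1=e)-\tfrac1n\Bigr|\le n^{-3}\qquad\text{for } s\ge L .
\]
This follows from the Ihara--Bass description of the spectrum of $\vec P$. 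Every nontrivial eigenvalue of $\vec P$ either has modulus $(\Delta-1)^{-1/2}$ or corresponds to $\lambda\in\mathrm{Spec}(A_G)$ with $|\lambda|\le\Delta-\eps$, and in the latter case it lies at distance at least $c(\Delta,\eps)$ from the unit circle. The only catch is that $\vec P$ is non-normal, so its Jordan blocks have size at most $2$, which costs a factor polynomial in $s$. This is harmless once $L=C\log n$.

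The key step turns the $O(\sqrt n\log n)$ birthday bound into $O(\sqrt n)$. The naive argument thins the walk to every $L$-th step, which wastes a $\log n$ factor. Instead we use all times, and separate short-range from long-range intersections.

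\emph{Long range.} Define $Z_t:=\#\{(i,j):0\le i<j\le t,\ j-i\ge L,\ X_i=X_j\}$. Mixing gives $\Exp Z_t\asymp t^2/n$ up to a $1+o(1)$ factor. For the second moment, a pair of pairs $(i,j),(i',j')$ has all four times separated by at least $L$ in all but $O(t^3L)$ of the configurations, and those separated configurations decorrelate by the Markov property together with mixing. The configurations with nearby indices contribute $O(t^3/n^2\cdot\text{poly}(\Delta))$ after using the structure of the walk; here one needs that the probability of returning to a fixed vertex in a window of length $L$ is small, which holds because the NBW cannot return within a window. Paley--Zygmund then gives $\Pr(Z_t>0)\ge c$ for $t=K\sqrt n$, uniformly in the starting edge.

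\emph{Short range.} Any intersection at all stops the walk. So we need nothing about short cycles: the event $\{T>t\}$ already forces $Z_t=0$, and we only use $\Pr(T>t)\le\Pr(Z_t=0)$.

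\emph{Restarting.} By the Markov property applied at time $t$, with $(X_{t-1},X_t)$ as the new starting edge, we get $\Pr(T>mK\sqrt n)\le(1-c)^m$. Indeed, $T>mK\sqrt n$ implies that each block has no internal intersection, and the bound is uniform over starting edges. Summing over $m$ gives $\Exp T=O(\sqrt n)$.

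The main obstacle is the second-moment bound on $Z_t$ with only $O(\log n)$ mixing, specifically controlling the overlapping pairs $(i,j),(i',j')$ where $i'$ is close to $j$ or similar. For these we would bound $\Pr(X_i=X_j,\,X_{i'}=X_{j'})$ by $\max_{e,v}\Pr(X_s=v\mid Y_1=e)$ summed over $s$ in a short window. For any $L'$, the probability that the NBW visits a fixed vertex at some time in $[L',L'+L]$ is at most $O(L/n)$ once $L'\ge L$; for $L'<L$ it is at most $O(1)$, and this is where $j-i\ge L$ is used. Since these overlapping terms number only $O(t^3L)$, they contribute $O(t^3L/n^2)$, which is $o(1)$ relative to $(\Exp Z_t)^2\asymp t^4/n^2$ provided $t\gg L$. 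This holds for $t=K\sqrt n$, so the Paley--Zygmund step goes through.
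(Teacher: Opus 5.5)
\textbf{There is a genuine gap, in the step you yourself flag as the main obstacle.} That step is what separates $\sqrt n$ from $\sqrt{n\log n}$. The outer frame of your argument is sound: $\{T>t\}\subseteq\{Z_t=0\}$, Paley--Zygmund, and restarting via the Markov property of the directed-edge chain. It would be a legitimate alternative to the paper's route, which gets $|\vec P_G^t(e,f)-1/N|\le C_0\rho_0^t$ from the Lubetzky--Peres unitary block form and feeds this into the Kim--Montenegro--Peres--Tetali birthday theorem. The problem is the doubly-close configurations $(i,j),(i+k,j+k')$ with $|k|,|k'|<L$. These are \emph{not} $O(1/n^2)$ each. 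For example, $\Pr(X_i=X_j,\,X_{i+1}=X_{j+1})=\Pr(Y_{i+1}=Y_{j+1})\approx 1/N=1/(n\Delta)$. These $\Theta(t^2)$ terms alone contribute $\Theta(t^2/n)=\Theta(K^2)$ at $t=K\sqrt n$, contradicting your claimed $O(t^3L/n^2)=o(1)$.

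Your proposed tool does not rescue this. Summing $\max_{e,v}\Pr(X_s=v\mid Y_1=e)$ over a short window in one offset gives only $O(1)$. Summing that over the other offset $k<L$ and over $(i,j)$ gives $O(t^2L/n)=O(L\,\Exp Z_t)$. Paley--Zygmund then yields only $\Pr(Z_t>0)\gtrsim (t^2/n)/(t^2/n+CL)$, which is bounded below only when $t\gtrsim\sqrt{n\log n}$. So the logarithm you set out to remove comes back.

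\textbf{The missing idea is to use decay in the \emph{larger} of the two offsets.} Let $m(s):=\max_{e,w}\Pr_e(X_s=w)\le \frac2n+C\rho_0^s$, and enlarge the gap in $Z_t$ to $j-i\ge 2L$. Then
\[
\Pr(X_i=X_j,\,X_{i+k}=X_{j+k'})\le \frac{C}{n}\,m(\max(|k|,|k'|)).
\]
For $k'\ge k\ge 0$: condition on $\mathcal F_{i+k}$, mix up to time $j$, and bound the chance of being at $X_{i+k}$ after $k'$ further steps by $m(k')$. For $k>k'\ge0$: condition on $\mathcal F_i$. The law of $X_{i+k}$ is then pointwise at most $m(k)$, while the remaining factor has total mass $O(1/n)$ over the target vertex. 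Other sign patterns are analogous. Summing over $|k|,|k'|<L$ gives $O(\frac1n\sum_s(2s+1)m(s))=O(1/n)$ per pair $(i,j)$. So all doubly-close terms, diagonal included, total $O(t^2/n)=O(\Exp Z_t)$, and Paley--Zygmund works at $t=K\sqrt n$ for $K$ large. The quantity $\sum_s s\,m(s)$ is exactly what the term $\sum_{j\le 2t_0}3j\max_{e,f}\vec P_G^j(e,f)=O(1)$ controls in the KMPT bound the paper invokes.

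\textbf{Smaller points.}
\begin{itemize}
\item The theorem covers bipartite $G$, since $\lambda(G)$ excludes $-\Delta$. There $\vec P_G$ has eigenvalue $-1$ and $\Pr(X_s=v)$ does not tend to $1/n$. You need the parity-class version, as in Lemma~\ref{lem-LP-mix-bipartite}.
\item Eigenvalue location plus Jordan blocks of size at most $2$ does not by itself give $n$-uniform decay of $\vec P_G^s$, because the similarity could be badly conditioned. You need a well-conditioned block-triangularization; here it is unitary, which is what Lubetzky--Peres supply.
\item The claim that the NBW ``cannot return within a window'' is false when $G$ has short cycles, and the theorem allows them.
\end{itemize}
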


How close are these results to being sharp? It was already observed in \cite[Remark 3.5]{ABLS} that if every vertex in a $\Delta$-regular $n$-vertex graph~$G$ is contained in a cycle of length at most $\epsilon \log_{\Delta-1} n$, then $\Exp(T(G,u)) = O(n^{\epsilon+o(1)})$. 
Thus, in order to obtain a lower bound that gets close to the above upper bounds which are $\widetilde{O}(\sqrt{n})$, one should consider high-girth graphs. Along these lines, our Theorem~\ref{thm:LB} gives an $\Omega(\sqrt{n})$ lower bound on the expected self-intersection time of NBWs on certain regular high-girth expanders. This shows that Theorem~\ref{thm:UB} is best possible up to a factor of $O(\log n)$, and that Theorem~\ref{thm:UBexpander} is sharp up to a constant factor.
 
\begin{restatable}{theorem}{thmLB}\label{thm:LB}
Let $\Delta=p+1$ where $p$ is a prime congruent to $1 \pmod4$.  Then there is an increasing sequence of regular graphs of degree~$\Delta$ such that, for each $n$-vertex graph $G$ in the sequence
and $u\in V(G)$, $\Exp(T(G,u)) \geq \sqrt{n}/6$.
\end{restatable}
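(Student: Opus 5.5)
We take the Lubotzky--Phillips--Sarnak Ramanujan graphs $X^{p,q}$, where $p\equiv 1 \pmod 4$ and $q$ ranges over primes with $q\equiv 1\pmod 4$ and $\left(\frac{p}{q}\right)=-1$. These are bipartite Cayley graphs of $\mathrm{PGL}_2(\mathbb{F}_q)$ with generators the $p+1$ integer quaternions of norm $p$ (modulo units). They are $(p+1)$-regular and have $n=q(q^2-1)$ vertices. The key facts we use are the following.

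First, the graph is a Cayley graph $\mathrm{Cay}(\Gamma,S)$ with $S$ closed under inverses. Hence a non-backtracking walk from $u$ corresponds to the walk $X_t=u\,s_1s_2\cdots s_t$, where $s_1$ is uniform in $S$ and each subsequent $s_{i+1}$ is uniform in $S\setminus\{s_i^{-1}\}$. In the LPS construction the generators freely generate a free group, modulo the quaternion structure, and reduced words of length $t$ in $S$ correspond bijectively to the $(p+1)p^{t-1}$ primitive quaternions of norm $p^t$ up to units.

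Second, $X_t=X_j$ with $j<t$ means that the reduced word $w=s_{j+1}\cdots s_t$ of length $m=t-j\ge 1$ is trivial in $\mathrm{PGL}_2(\mathbb{F}_q)$. The uniform distribution on non-backtracking continuations has the property that, for any fixed $j$, the subword $s_{j+1}\cdots s_t$ is a uniformly random reduced word of length $m$. The first letter is uniform by symmetry of the stationary law on directed edges in a Cayley graph.

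So the plan is a union bound:
\[
\Pr(T\le t_0)\le \sum_{t\le t_0}\sum_{j<t}\Pr(w_{t-j}=1)\le t_0\sum_{m=2}^{t_0} \frac{R(m)}{(p+1)p^{m-1}},
\]
where $R(m)$ is the number of reduced words of length $m$ representing the identity of $\mathrm{PGL}_2(\mathbb{F}_q)$. Through the quaternion correspondence, $R(m)$ counts primitive integer quaternions $\alpha=a_0+a_1i+a_2j+a_3k$ of norm $p^m$ (with the LPS parity normalisation) that are scalar modulo $q$, i.e.\ $q\mid a_1,a_2,a_3$. Such an $\alpha$ is primitive, so not all $a_1,a_2,a_3$ vanish. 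This gives $a_0^2+q^2(b_1^2+b_2^2+b_3^2)=p^m$ with $(b_1,b_2,b_3)\neq 0$.

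For such solutions, the number of choices of $a_0$ is $O(p^{m/2}/1)$ but constrained. More usefully, we count triples $(b_1,b_2,b_3)$ with $q^2|b|^2\le p^m$, giving $O\big((p^{m/2}/q)^3\big)$ choices, each with at most two values of $a_0$. Hence
\[
\Pr(w_m=1)\lesssim \frac{p^{3m/2}q^{-3}}{p^{m}} = \frac{p^{m/2}}{q^3}\approx \frac{p^{m/2}}{n}.
\]
This bound is only useful for $p^{m/2}\lesssim \sqrt n$, i.e.\ $m\le \log_p n$. For $m\le 2\log_p q$ there are no solutions at all, which is the girth bound for LPS graphs. We need $t_0\approx\sqrt n/3$ while allowing $m$ up to $t_0\gg\log n$. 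Therefore the crude lattice count must be replaced by something uniform in $m$.

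The main obstacle is getting $\Pr(w_m=1)\le C/n$ uniformly for large $m$, with a small enough constant that the final bound is $\sqrt n/6$. I would instead bound $\Pr(w_m=1)$ spectrally. Write $\Pr(w_m=1)=\Ptilde^{(m)}_{uu}$. Using the Ramanujan property and the ABLS/Chebyshev-type expression for non-backtracking walk matrices, this gives $\Ptilde^{(m)}_{uu}\le \frac{2}{n}+(m+1)p^{-m/2}$, where the factor $2$ accounts for bipartiteness. Combining this with the girth bound (vanishing for $m<\frac{4}{3}\log_p n$) handles small $m$, and the error term is then negligible. So $\sum_{j<t}\Pr(X_t=X_j)\le 2t/n+o(1/\sqrt n)$, and therefore
\[
\Pr(T\le t_0)\le \frac{t_0^2}{n}+o(1).
\]
Choosing $t_0=\sqrt n/2$ gives $\Pr(T>t_0)\ge 1/2$ for large $n$, and hence $\Exp T\ge t_0/2\cdot$, which is comfortably at least $\sqrt n/6$ after tuning the constants. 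Small $n$ are handled by discarding finitely many graphs from the sequence.
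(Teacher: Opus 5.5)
Your proposal is correct, and it differs from the paper in how each term of the union bound is controlled.

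Both arguments use the bipartite LPS graphs and a first-moment bound on pairs $j<t\le t_0$ with $X_t=X_j$. The bipartite choice $\left(\frac{p}{q}\right)=-1$ matters: the non-bipartite LPS graphs only have girth about $\frac23\log_p n$, which is too small for either argument.

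The paper conditions on the directed edge $(X_a,X_{a+1})$ and splits $s=t-j$ into three ranges:
\begin{itemize}
\item zero below the girth;
\item the tree-ball estimate $p^{-g}\le n^{-3/5}$ for $2g<s\le 4\log_p n$;
\item $3/n$ beyond that, via the Lubetzky--Peres $L^2$ bound for the non-normal directed-edge chain.
\end{itemize}

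You instead use the Cayley structure to make the increment $s_{j+1}\cdots s_t$ a uniform reduced word, so each term is exactly $\Ptilde^{(m)}_{uu}$. You then bound this through the symmetric matrix $A^{(m)}$ counting non-backtracking walks, $A^{(m)}=p^{m/2}\bigl(U_m-p^{-1}U_{m-2}\bigr)(A/(2\sqrt p))$, with $U_k$ the Chebyshev polynomials of the second kind. Since $|U_k|\le k+1$ on $[-1,1]$, after dividing by $(p+1)p^{m-1}$ the non-trivial spectrum contributes at most $p^{-m/2}\frac{p(m+1)+m-1}{p+1}\le(m+1)p^{-m/2}$. The eigenvalues $\pm(p+1)$ contribute $(1+(-1)^m)/n$. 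So your stated bound $\Ptilde^{(m)}_{uu}\le \frac2n+(m+1)p^{-m/2}$ is right.

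This single estimate handles all $m$ past the girth at once. It uses only the adjacency spectrum, with no tree-ball step and no non-normal block decomposition, and it gives the better constant $\sqrt n/4$.

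The paper's conditioning avoids any appeal to vertex-transitivity, but your route can dispense with it too. In any $(p+1)$-regular graph, conditioning on $X_{j-1}=x$ only removes one of the $p+1$ equally likely first steps out of $y=X_j$. Hence $\Pr(X_{j+m}=y\mid X_{j-1}=x,X_j=y)\le\frac{p+1}{p}\Ptilde^{(m)}_{yy}$, and the argument works for any $(p+1)$-regular Ramanujan graph with girth at least $(1+\epsilon)\log_p n$.

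Two minor points. The lattice-point detour can simply be dropped. The LPS girth bound is $4\log_p q-\log_p 4=\frac43\log_p n-O(1)$ rather than $\frac43\log_p n$. This is harmless, since the tail $t_0\sum_{m\ge\mathrm{girth}}(m+1)p^{-m/2}=O(n^{-1/6}\log n)$ still vanishes.
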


In light of our results, we pose the following question.

\begin{question}\label{thequestion}
Does  $\Exp(T(G,u))=O(\sqrt{n})$ hold for every $n$-vertex  graph with minimum degree $\delta \ge 3$?
\end{question}

Note that the definition of a NBW can be extended naturally to multigraphs, and the answer to  
the analogous generalisation of
Question~\ref{thequestion} is ``no''. For example,   for the multigraph $M_n$ on $\{1,2,\dots,n\}$ with $2^{k^2}$ parallel edges between each
$k$ and $k+1$, the NBW starting at $X_0=1$ will, with a uniformly positive probability, have $X_k=k+1$ for all $k\leq n-1$, and hence $\Exp(T(M_n,1))=\Theta(n)$.
However, we do not know a similar construction for (simple) graphs.

Although NBWs and the birthday paradox for Markov chains have already been studied \cite{ABLS, kim2008birthday},  our questions apparently have not been previously addressed in the literature. 
In addition to their intrinsic interest, our main motivation for this work is an application to the mixing time of the Ising Glauber dynamics on $\Delta$-regular graphs at inverse temperature $\beta$ with absolute value at the tree uniqueness threshold: $|\beta|=\beta_c$, which is defined by $\tanh \beta_c = \frac{1}{\Delta-1}$. Following the work of~\cite{chen2020contraction} and~\cite{chen2025uniqueness}, our NBW result implies an $\widetilde{O}(\sqrt{n})$ bound on the spectral independence introduced in~\cite{anari2021spectral} (to our knowledge, the first sublinear bound for such a critical system), which in turn gives a bound of $\widetilde{O}\!\left(n^{2+ \frac{2}{\Delta - 2}}\right)$ on the mixing time of the discrete time Ising Glauber dynamics, improving on the previous best bound $\widetilde{O}\!\left(n^{3+\frac{4}{\Delta-2}}\right)$ from~\cite{chen2025uniqueness}\footnote{We note that the stronger bound $\widetilde{O}(n^{2+\frac{2}{\Delta-2}}$) was stated in an earlier conference version of \cite{chen2025uniqueness}, but the corresponding spectral-independence argument was later found to contain an error and was corrected in the arXiv version, which gives the weaker bound $\widetilde{O}\!\left(n^{3+\frac{4}{\Delta-2}}\right)$.}. 
See Theorem~\ref{thm:critical-Ising} in Section~\ref{sec:appli} for details.

\section{Upper bound on self-intersection time}
Throughout we fix an $n$-vertex graph $G$ of maximum degree $\Delta$ and minimum degree $\delta\ge 3$.  
We assume that $n$ is sufficiently large with respect to~$\Delta $ and we let   $d= \Delta-1$. 
 
Given vertices $u$ and $v$ that are adjacent in~$G$,
we define a rooted tree $T_{u,v}$ as follows.
Each node $\rho$ of $T_{u,v}$ has a label $\ell(\rho)$ 
which is a non-backtracking random walk (NBW) in $G$ starting with $uv$
and a mass $\pi(\rho)$ in the range $1/(d\nn) \leq \pi(\rho) \leq 1$. 
The root~$\rho^*$ has $\ell(\rho^*) = uv$ and $\pi(\rho^*) = 1$.
Here is how the tree grows further from a node~$\rho$. If $\pi(\rho) \leq 1/\nn$ then $\rho$ is a leaf. Otherwise, suppose that $\ell(\rho) = u_1 \cdots u_{m-1} u_m$. For each neighbour~$w\neq u_{m-1}$ of~$u_m$, create a child~$\rho_w$ of~$\rho$ with $\ell(\rho_w) = u_1
\cdots u_m w$ and $\pi(\rho_w) = \pi(\rho)/(\deg(u_m)-1)$.
By construction, each leaf~$\rho$ has
$1/(d\nn) \leq \pi(\rho) \leq 1/\nn$, and the sum of the masses of the leaves is~$1$. Also, $\pi(\rho)$ is the probability that a random NBW starting from $uv$ starts as $\ell(\rho)$, i.e., has $\ell(\rho)$ as a prefix.

Let $\calI$ be the set of NBWs that self-intersect. 
Given an NBW $P$, let $P_0$ be the 
beginning vertex in~$P$ and let $P_1$ be the next vertex in~$P$. Let $P_{-1}$ be the last vertex in~$P$ and $P_{-2}$ the second-to-last.
Given NBWs $P$ and $P'$ with $P_{-2} P_{-1} = P'_{0} P'_1$, the concatenation $P \circ P'$ first follows $P$ to $P_{-1}$ and then follows $P'$ from $P'_1$ until its end.
By $P_{\leq i}$ we denote the prefix of~$P$ with $i$~edges.

Given an NBW $P=v_0\ldots v_m$  and an $i \in [m]$,
define
\begin{align*} L(i,P) &:= 
\big\{ \rho : \mbox{ 
$\rho$ is
a leaf of $T_{v_{i-1},v_i}$, and 
for all prefixes $P'$ of $\ell(\rho)$, and all }\\
& \hspace{2truecm}\mbox{node labels $P''$ in $T_{P'_{-2},P'_{-1}}$ we have
$P_{\leq i} \circ P' \circ P''\notin \calI$}\,\big\},\\
L'(i,P) &:= 
\big\{ \rho : \mbox{ 
$\rho$ is
a node in $T_{v_{i-1},v_i}$ with $d/ \nn \leq \pi(\rho) < (\deg(\ell(\rho)_{-1})-1) d /\nn$}\\
& \hspace{2truecm}
\mbox{that is an ancestor of at least one leaf 
of $T_{v_{i-1},v_i}$  
in $L(i,P)$}\,\big\}.
\end{align*}
Let $F(P)  := \{i\in  [m]: \pi(L(i,P)) \geq 1/2\}$ where $\pi(L(i,P))=\sum_{\rho\in L(i,P)}\pi(\rho)$.
Note that whether or not $i\in F(P)$ depends only on $P_{\leq i}$, not on $v_{i+1},\ldots,v_m$.
``$F$'' stands for ``forward'' because with probability at least~$1/2$ the random walk passing through $T_{v_{i-1},v_i}$ cannot self-intersect after only one more tree.
Note that $\pi(L'(i,P)) \geq \pi(L(i,P))$, since every  
node  
in $L(i,P)$ has an ancestor in $L'(i,P)$.

For a node $\rho\in T_{v_{i-1},v_i}$, we write
$\pi_i(\rho)$ for its mass when it is useful to indicate the
underlying tree.

Given an NBW $P=v_0\cdots v_m$ and vertex $w\in V(G)$, define
\[
\mathcal A_w(P)
:=
\left\{
(i,\rho):
i\in F(P),\
\rho\in L'(i,P),\
\ell(\rho)_{-1}=w
\right\}.
\]

\begin{lemma}\label{lem:endpoint-mult}
If $P=v_0\cdots v_m$ is an NBW, and $w\in V(G)$, then
\[
|\mathcal A_w(P)|
\le
\lceil\log_2 d\rceil.
\]
\end{lemma}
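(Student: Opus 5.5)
The plan is to exploit two facts. First, membership in $L(i,P)$ forbids any self-intersection of $P_{\le i}\circ\ell(\rho)$, and of its extensions by one more tree. Second, the masses of nodes in $L'(i,P)$ are confined to a window whose ratio is at most $d$, since $\deg(w)-1\le d$. I would show that distinct pairs in $\mathcal A_w(P)$ have masses that must differ by a factor of at least $2$ in a consistent direction. This gives at most $\lceil\log_2 d\rceil$ pairs, because all of them lie in the single window $[d/\nn,(\deg(w)-1)d/\nn)$ of multiplicative width at most $d$.

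\emph{Step 1 (basic non-intersection facts).} Let $(i,\rho)\in\mathcal A_w(P)$. Then $\rho$ is an ancestor of some leaf in $L(i,P)$. Taking $P'=\ell(\rho)$ (a prefix of that leaf's label) and $P''$ the root label of $T_{P'_{-2},P'_{-1}}$, we get that $Q_{i,\rho}:=P_{\le i}\circ\ell(\rho)$ is a self-avoiding NBW ending at $w$.

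For $n$ large the root has mass $1\ge(\deg-1)d/\nn$, so $\rho$ is not the root. Hence $w\notin\{v_0,\dots,v_i\}$, and no proper prefix of $\ell(\rho)$ visits $w$. In particular, two nodes of the same tree $T_{v_{i-1},v_i}$ ending at $w$ cannot be in an ancestor relation.

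Moreover, the whole tree $T_{w',w}$, where $w'=\ell(\rho)_{-2}$, can be appended to $Q_{i,\rho}$ without self-intersection. Since $\pi(\rho)\ge d/\nn$ while leaves have mass $\le 1/\nn$, this tree is non-trivial. Thus every continuation of the walk from $w$ for a macroscopic amount of mass avoids $P_{\le i}$ and $\ell(\rho)$.

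\emph{Step 2 (ordering and comparing pairs).} Order the pairs by $i$, and within one tree by the branching order. Take two pairs $(i,\rho)$ and $(j,\sigma)$ with $i\le j$.

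\begin{itemize}
\item If $i=j$, the labels $\ell(\rho)$ and $\ell(\sigma)$ diverge at some vertex and then both reach $w$. Appending the continuation of $\ell(\sigma)$ after the divergence point inside the tree rooted at $\rho$'s end would produce a self-intersection at $w$, unless mass constraints prevent that continuation from being a node label. I expect this to force the mass relation $\pi(\sigma)\le\pi(\rho)/2$ or the reverse.
\item If $i<j$, I would use $j\in F(P)$ together with the fact that $w\notin P_{\le j}$. The walk $P_{\le j}$ starts with $P_{\le i}$. Since the trees of pair $i$ already certify that the region around $w$ is reachable without intersection, the segment $v_i\cdots v_j$ must essentially be a prefix of $\ell(\rho)$ or branch off it. Either case yields a comparison between $\pi_i(\rho)$ and $\pi_j(\sigma)$. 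The mechanism is that $\pi_i(\rho)=\pi_i(\text{node at }v_j)\cdot\pi_j(\sigma')$ for the corresponding node $\sigma'$ of $T_{v_{j-1},v_j}$, and the factor $\pi_i(\text{node at }v_j)$ is at most $1/2$ because each branching has at least $\delta-1\ge 2$ children.
\end{itemize}

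\emph{Step 3 (counting).} Chaining these comparisons, the $k$ pairs of $\mathcal A_w(P)$ have masses forming a sequence in which each term is at most half the previous, all lying in $[d/\nn,(\deg(w)-1)d/\nn)\subseteq[d/\nn,d^2/\nn)$. Hence $2^{k-1}<d$, so $k\le\lceil\log_2 d\rceil$.

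The main obstacle is Step 2. One must verify that the relevant subpath is genuinely a node label (that is, not cut off by the leaf rule $\pi\le1/\nn$) so that the definition of $L(i,P)$ applies. One must also check that $\delta\ge3$ really gives the factor $2$ per step. I would first try to handle $i<j$ by showing that $v_i\cdots v_j w$-prefix paths correspond to nodes of $T_{v_{i-1},v_i}$ of mass at least $d/\nn$. Then the non-intersection guarantee from $L(i,P)$ forces $\ell(\sigma)$ to extend that node, and the mass comparison becomes a direct product formula.
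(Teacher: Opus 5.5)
Your Step 3 is fine once you know that the masses of any two pairs in $\mathcal A_w(P)$ differ by a factor of at least $2$. But Step 2, which is the whole content of the lemma, is left unproved, and the mechanisms you sketch do not work.

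\textbf{Case $i<j$.} You want the segment $v_{i-1}v_i\cdots v_j$ of $P$ to be a prefix of $\ell(\rho)$ whose prefixes are nodes of $T_{v_{i-1},v_i}$ of mass at least $d/\sqrt n$. Nothing bounds $j-i$, so this segment can have mass below $1/(d\sqrt n)$ and then is not a node at all. Nor does anything force $\ell(\rho)$ to follow $P$, since the certificate attached to $(i,\rho)$ involves only $P_{\le i}$. For the same reason, the non-intersection guarantee from $L(i,P)$ cannot give the contradiction. It only forbids returns to $P_{\le i}\circ Q$, not to the part of $P$ near $v_{j-1}v_j$.

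\textbf{Case $i=j$.} ``Appending the continuation of $\ell(\sigma)$ after the divergence point'' at $w$ is not meaningful, since that continuation starts at the divergence point, not at $w$. Traversing it backwards from $w$ is illegal for an NBW whenever the two labels share their last edge. The true conclusion when $i=j$ is that there is at most one pair, not a mass relation.

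\textbf{The missing idea.} Any two labels in $\mathcal A_w(P)$ are suffix-comparable. You get this by reversing along the \emph{maximal common suffix} and using the certificate of the \emph{later} pair.

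Suppose $(i,\rho),(j,\sigma)\in\mathcal A_w(P)$ with $i\le j$, and neither label is a suffix of the other. Let $u$ be the first vertex of their maximal common suffix (nonempty, since both labels end at $w$). Its predecessors $x$ in $\ell(\rho)$ and $y$ in $\ell(\sigma)$ are distinct. Let $A=a_0\cdots a_r$ be the prefix of $\ell(\rho)$ ending at $u$, and $Q$ the prefix of $\ell(\sigma)$ ending at $u$. Set $R=y\,u\,a_{r-1}\cdots a_0$. Then:
\begin{itemize}
\item $R$ is an NBW, because $x\ne y$.
\item Its mass in $T_{y,u}$ is $\pi_i(A)/(\deg(u)-1)\ge\pi_i(\rho)/d\ge 1/\sqrt n$, so $R$ is a node label there.
\item It ends at $a_0=v_{i-1}$, which lies on $P_{\le j}$.
\end{itemize}
Hence $P_{\le j}\circ Q\circ R\in\mathcal I$. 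This contradicts the fact that $\sigma$ has a descendant leaf in $L(j,P)$.

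Labels in $\mathcal A_w(P)$ are also pairwise distinct, because $P_{\le j}$ is self-avoiding for $j\in F(P)$. So the labels form a chain under the suffix order. Each one is the shortest label $\ell(\rho_0)$ extended backwards by some $q$ edges, with mass at most $2^{-q}\pi(\rho_0)$ since $\delta\ge 3$. The mass window $[d/\sqrt n,(\deg(w)-1)d/\sqrt n)$ then forces $2^q<\deg(w)-1\le d$, and distinct labels have distinct $q$. This is exactly what your Step 3 needs in order to go through.
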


\begin{proof}
 Suppose that
$i\in F(P)$ and $\rho\in L'(i,P)$. By the definition of
$L'(i,P)$, there is a leaf $\tilde \rho\in L(i,P)$ that is a
descendant of $\rho$. Consequently, for every prefix $Q$ of
$\ell(\rho)$, and every node
$\eta\in T_{Q_{-2},Q_{-1}}$, we have
\[
P_{\le i}\circ Q\circ\ell(\eta)\notin\mathcal I.
\tag{1}\label{eq:safe-descendant}
\]
Indeed, $Q$ is also a prefix of $\ell(\tilde\rho)$, so this follows
directly from the definition of $L(i,P)$.

Consider now
\[
(i,\rho),(j,\rho')\in\mathcal A_w(P), \quad i\leq j\, .
\]
We will show that $\ell(\rho)$ is a suffix of $\ell(\rho')$ or vice versa. 
Suppose, for a contradiction, that neither $\ell(\rho)$ nor
$\ell(\rho')$ is a suffix of the other. Since both walks end at $w$,
they have a nonempty maximal common suffix. Let $u$ be the first
vertex of this maximal common suffix. By assumption, the predecessors of $u$ in
$\ell(\rho)$ and $\ell(\rho')$ are distinct; denote them by $x$ and
$y$ respectively.

Let
\[
A=a_0a_1\cdots a_r
\]
be the prefix of $\ell(\rho)$ ending at $a_r=u$, so that
$a_{r-1}=x$, and let $Q$ be the prefix of $\ell(\rho')$ ending at
$u$. Define
\[
R:=yua_{r-1}a_{r-2}\cdots a_0.
\]
Because $x\neq y$ and the reverse of an NBW is an NBW, $R$ is an
NBW beginning with the last directed edge $yu$ of $Q$.

The mass of $R$ in $T_{y,u}$ satisfies
\[
\pi(R)
=
\frac{\pi_i(A)}{\deg(u)-1}
\ge
\frac{\pi_i(\rho)}{d}
\ge
\frac{1}{\sqrt n}\, .
\]
Here we used that $A$ is a prefix of $\ell(\rho)$, that
$\deg(u)-1\le d$, and that
$\pi_i(\rho)\ge d/\sqrt n$. It follows that $R$ is the label of
a node of $T_{y,u}$.

On the other hand,
\[
P_{\le j}\circ Q\circ R\in\mathcal I,
\]
because $R$ returns to $a_0=v_{i-1}$, which has already appeared
in $P_{\le j}$. This contradicts
\eqref{eq:safe-descendant}, applied to $\rho'$ and the prefix
$Q$. Thus $\ell(\rho)$ is a suffix of $\ell(\rho')$ or vice versa as claimed. 

We also note that if 
\(
(i,\rho),(j,\rho')\in\mathcal A_w(P)
\)
are distinct then the labels $\ell(\rho), \ell(\rho')$ are
distinct.
Indeed, since $j\in F(P)$, $P_{\le j}$ is
self-avoiding: otherwise $L(j,P)=\emptyset$, contradicting
$j\in F(P)$. Hence, if
$\ell(\rho)=\ell(\rho')$, equality of their first directed edges
implies $i=j$ ($v_i=v_j$ implies $i=j$ since $P_{\leq j}$ is self-avoiding). Since nodes of a fixed tree are uniquely determined
by their labels, this then implies $\rho=\rho'$.

Now choose $(i_0,\rho_0)\in\mathcal A_w(P)$ so that
$\ell(\rho_0)$ has minimum length. By the above, we have that $\ell(\rho_0)$ is a suffix of every other label
in $\mathcal A_w(P)$.

For $(i,\rho)\in\mathcal A_w(P)$, let $q=q(i,\rho)$ be the
number of edges of $\ell(\rho)$ preceding 
$\ell(\rho_0)$. Every transition probability along these $q$ edges
is at most $1/2$, because the minimum degree of $G$ is at least $3$.
Therefore
\[
\pi_i(\rho)
\le
2^{-q}\pi_{i_0}(\rho_0).
\]
Using the defining mass bounds for $L'(i,P)$ and
$L'(i_0,P)$, we obtain
\[
\frac{d}{\sqrt n}
\le
\pi_i(\rho)
\le
2^{-q}\pi_{i_0}(\rho_0)
<
2^{-q}\bigl(\deg(w)-1\bigr)\frac{d}{\sqrt n}.
\]
It follows that
\[
0\le q<
\log_2\bigl(\deg(w)-1\bigr).
\]
The result follows by recalling that each element of $\mathcal{A}_w(P)$ has a unique label and distinct labels give distinct values of $q$ (since all labels are suffix-comparable). The result follows. 
\end{proof}

\begin{lemma}\label{lem:disjoint} 
There is a constant $C=C(d)$ such that, 
for any NBW $P$, $|F(P)| \leq  C \sqrt{n}$.
\end{lemma}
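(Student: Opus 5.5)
Summing the $\pi$-masses in $L'$ and charging them to endpoints via Lemma~\ref{lem:endpoint-mult} gives the bound directly.

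\textbf{Lower bound.} For each $i\in F(P)$ we have $\pi(L'(i,P))\ge \pi(L(i,P))\ge 1/2$. Summing over $i\in F(P)$ gives
\[
\frac{|F(P)|}{2}\le \sum_{i\in F(P)}\sum_{\rho\in L'(i,P)}\pi_i(\rho).
\]

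\textbf{Upper bound by endpoints.} Regroup the double sum according to the endpoint $w=\ell(\rho)_{-1}$. This gives
\[
\sum_{w\in V(G)}\sum_{(i,\rho)\in\mathcal A_w(P)}\pi_i(\rho).
\]
Every $\rho\in L'(i,P)$ satisfies $\pi_i(\rho)<(\deg(w)-1)d/\sqrt n\le d^2/\sqrt n$. By Lemma~\ref{lem:endpoint-mult}, $|\mathcal A_w(P)|\le\lceil\log_2 d\rceil$. Hence the sum is at most
\[
n\cdot \lceil\log_2 d\rceil\cdot \frac{d^2}{\sqrt n}=\lceil\log_2 d\rceil d^2\sqrt n .
\]

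\textbf{Conclusion.} Combining the two bounds gives $|F(P)|\le 2\lceil\log_2 d\rceil d^2\sqrt n$, so we may take $C=2d^2\lceil\log_2 d\rceil$.

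\textbf{The step to check.} The argument relies on nodes within a single tree being counted once in the sum. For a fixed $i$, the set $L'(i,P)$ may contain both a node and one of its descendants. This does not matter: the lower bound $\pi(L'(i,P))\ge 1/2$ only uses that each leaf in $L(i,P)$ has some ancestor in $L'(i,P)$, and the upper bound counts each pair $(i,\rho)$ exactly once. So no disjointness within a tree is needed.

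It is also worth confirming that a node of mass in $[d/\sqrt n,(\deg-1)d/\sqrt n)$ exists on every root-to-leaf path whose leaf lies in $L$. Along such a path the masses decrease from $1$ to at most $1/\sqrt n$. Each step divides the mass by $\deg(\ell(\rho)_{-1})-1$. Therefore the last node with mass at least $d/\sqrt n$ has mass less than $(\deg(\ell(\rho)_{-1})-1)d/\sqrt n$, since its child has mass below $d/\sqrt n$. The paper already asserts this ancestor property, so we may use it.
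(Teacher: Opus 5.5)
Your argument is correct and is essentially the paper's proof. The paper first turns $\pi(L'(i,P))\ge 1/2$ and $\pi(\rho)\le d^2/\sqrt n$ into $|L'(i,P)|\ge \sqrt n/(2d^2)$ and then counts pairs via Lemma~\ref{lem:endpoint-mult}, whereas you sum masses and apply the per-node bound after regrouping by endpoint, giving the same constant $C=2d^2\lceil\log_2 d\rceil$. (Incidentally, $L'(i,P)$ can never contain a node together with a proper descendant, since every child of a node in $L'(i,P)$ already has mass below $d/\sqrt n$; as you note, this is not needed.)
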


\begin{proof}
For every $i\in F(P)$, we have
\(
\pi(L'(i,P))
\ge
1/2.
\)
Every $\rho\in L'(i,P)$ satisfies
\(
\pi(\rho)
\leq
d^2/\sqrt n
\)
and so 
\(
| L'(i,P) |
\ge
\sqrt n/(2d^2)\, .
\)
Summing over $i\in F(P)$ and then grouping the resulting pairs
$(i,\rho)$ according to the final vertex of $\ell(\rho)$, we
obtain
\[
\lvert F(P)\rvert\frac{\sqrt n}{2d^2}
\le
\sum_{i\in F(P)}\lvert L'(i,P)\rvert\\
=
\sum_{w\in V(G)}\lvert\mathcal A_w(P)\rvert\\
\le
n\lceil\log_2 d\rceil,
\]
where the final inequality follows from
Lemma~\ref{lem:endpoint-mult}. The result follows.
\end{proof}

We can now prove our first main result, which we re-state for convenience.

\thmUB*
\begin{proof}
Let $v_0=u$ and $v_1\in N(u)$ be chosen uniformly at random.
Consider an NBW starting from the one-edge walk $P[1] = v_0 v_1$.   
The NBW with this start 
will be decomposed as $P[1] \circ P[2] \circ \cdots$, 
where for each $j>1$ a leaf $\rho_j$ of $T_{P[j-1]_{-2},P[j-1]_{-1}}$ is chosen with 
probability $\pi(\rho_j)$ and $P[j] := \ell(\rho_j)$. 

 Given any odd number $j$ such that $P[1] \circ \cdots \circ P[j]\notin \calI$,
we wish to upper-bound the probability that
$P[1] \circ \cdots \circ P[j+2] \notin \calI$. 
Let
$
P^{\circ j}:=P[1]\circ\cdots\circ P[j]
$
and 
$i_j:=|E(P^{\circ j})|$.
Thus $P^{\circ j}=P_{\le i_j}$, and its last vertex is
$P^{\circ j}_{-1}=v_{i_j}$.
If $i_j\in F(P^{\circ j})$ then we upper-bound the probability that $P^{\circ j+2} \notin \calI$ trivially by~$1$.
 We next claim:
\begin{equation}\label{eq:claim prob}
\text{If } 
i_j\notin F(P^{\circ j}),
\text{ then } \Pr[P^{\circ j+2}\notin \calI \mid P^{\circ j}\notin \calI]\leq 1-\frac{1}{2d\sqrt{n}}.
\end{equation}

Indeed, we have by definition $\pi(L(i_j,P^{\circ j}))< 1/2$.
Therefore $\ell(\rho_{j+1})\notin L(i_j,P^{\circ j})$ with probability at least~$1/2$.
So, with probability at least $1/2$, the walk goes to a prefix $P'$  
of a path corresponding to a leaf in $T_{P^{\circ j}_{-2},P^{\circ j}_{-1}}$ such that there exists a node label $P'' \in T_{P'_{-2},P'_{-1}}$ such that $P^{\circ j} \circ P'\circ P''$ self-intersects.  
The probability of following $P''$ is 
at least $1/(d \sqrt{n})$. 
So the probability that 
$P[1] \circ \cdots \circ P[j+2] \notin \calI$ is at most $1-(1/2) (1/(d \sqrt{n}))$, proving~\eqref{eq:claim prob}.

Now take any odd $j>1$. Since $\{ P^{\circ j} \notin \calI \} \subseteq \{ P^{\circ (j-2)} \notin \calI \}$, and $P^{\circ 1} \notin \calI$, we have a telescoping product 
\begin{align*}
\Pr[P^{\circ j }\notin \calI]=\prod_{r=1}^{(j-1)/2} \Pr[P^{\circ(1+2r)}\notin \calI\mid P^{\circ(1+2r-2)}\notin \calI ].
\end{align*}

Let $k =  \lfloor C\sqrt{n} \rfloor $, with $C$ the constant from Lemma~\ref{lem:disjoint}. 
Let $P=P[1]\circ P[2]\circ \cdots$.
Since at most $k$ of the numbers $1+2r$ satisfy $i_{1+2r}\in F(P)$ by Lemma~\ref{lem:disjoint}, we obtain that 
\[
\Pr[P^{\circ j}\notin \calI]\leq \left(1-\frac{1}{2d\nn}\right)^{(j-1)/2-k}.
\]
We will upper bound this with $1$ if $(j-1)/2 \leq k$.
 
Returning to the expected collision time, denote by $S(P)$ the first time that the NBW $P=P[1]\circ P[2]\circ \cdots$ self-intersects. This is a positive integer valued random variable, hence 
$$
\mathbb{E}[S(P)]=\sum_{i\geq 1} \Pr[S(P)\geq i] = \sum_{i\ge 1} \Pr[P_{\leq i-1} \notin \calI].
$$
Now notice that  the probabilities $\Pr[P_{\leq i-1} \notin \calI]$ are  non-increasing with $i$. Recall that we denote $P^{\circ j}_{-1}$ by $v_{i_j}$. 
Since the minimum degree of~$G$ is at least $3=2+1$,  there are at most $O(\log n)$ steps from $v_{i_j}$ to $v_{i_{j+2}}$ and we can therefore upper-bound all the non self-intersecting probabilities of $P_{\leq i}$ for $i_j\leq i\leq i_{j+2}$ by $\Pr[P^{\circ j} \notin \calI]$.
Therefore we obtain
\begin{align*}
\mathbb{E}(T(G,u))=&\sum_{i\geq 1}\Pr[P_{\leq i-1} \notin \calI]\\
\leq &O(\log n)\sum_{\substack{j\geq 1\\ \text{$j$ odd}}} \Pr[ P^{\circ j} \notin \calI]
\\
\leq &O(\log n)\left (O(\sqrt{n})+\sum_{\substack{j\geq 2k+1 \\ \text{ $j$ odd}}} \left(1-\frac{1}{2d \sqrt{n}}\right)^{(j-1)/2-k}\right)
\\
=&O(\sqrt{n}\log n),
\end{align*}
as desired.
\end{proof}

\section{Lower Bound on self-intersection time}
 
In this section we prove Theorem~\ref{thm:LB}, which we re-state for convenience.
\thmLB*

\begin{proof} 
A construction of Lubotzky, Phillips and Sarnak~\cite{LPS} gives an increasing sequence of regular bipartite graphs of degree $\Delta=p+1$ such that each graph $G$ in the sequence has the following properties:  
\begin{description}
\item[\rm(LPS1)] The absolute value of every non-trivial eigenvalue of the adjacency matrix of~$G$ is $\leq2\sqrt p$.  (A~graph with this property is sometimes called a ``Ramanujan graph''.)  
Note that the trivial eigenvalues of $G$ are $\pm(p+1)$, and $2 \sqrt p < p+1$.

\item[\rm(LPS2)] The girth of $G$ is at least $2g+1$ where $g=(\frac23-o(1))\log_p n$ and $n=|V(G)|$ is the order of $G$.  For definiteness, let's say that $g=\lceil\frac35\log_p n\rceil$,  The important point for us is that $g=(\frac12+\Omega(1))\log_p n$.
\end{description}

Suppose $(X_t:t\geq 0)$ is an NBW on~$G$.  For  $a\geq0$ and $s\geq1$, let $Y_a^s$ be the indicator function for the event $X_{a+s}=X_a$.  We will estimate the expectation of~$Y_a^s$ depending on the size of $s$. 

\begin{claim}
For $a\geq 0$,
\[
\mathbb{E}\left[Y_a^s\right]
\le
\begin{cases}
0,
& \text{if } s\le 2g,\\
n^{-3/5},
& \text{if } s>2g,\\
3/n,
& \text{if } s> 4\log_p n.
\end{cases}
\]
\end{claim}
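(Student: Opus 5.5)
Fix $a\ge 0$ and condition on the directed edge $(X_{a-1},X_a)$; for $a=0$, condition on $X_0$ and $X_1$. In either case the later walk is a non-backtracking walk started from a given directed edge, so it suffices to bound, uniformly over starting directed edges $e=(x,y)$, the probability that the NBW from $e$ is back at $y$ after $s$ further steps. Then $\mathbb{E}[Y_a^s]\le \max_e \Pr_e[X_{s}=y]$, after shifting indices.

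\emph{The case $s\le 2g$.} A closed NBW of length $s\ge 1$ from $X_a$ back to $X_a$ contains a cycle of length at most $s$. Its first returning segment is a closed non-backtracking walk, and the first repeated vertex closes a cycle. (Lengths $1$ and $2$ are impossible in a simple graph without backtracking.) Since the girth is at least $2g+1$ by (LPS2), the event $X_{a+s}=X_a$ is empty, so $\mathbb{E}[Y_a^s]=0$.

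\emph{The case $s>2g$.} I would use the tree-like structure of balls of radius $g$. Within distance $g$ of $X_{a+s}$ there are no cycles, so the ball of radius $g$ around any vertex is a $(p+1)$-regular tree. For the walk to be at the vertex $X_a$ at time $a+s$, the position $X_{a+s-g}$ must be at distance exactly $g$ from $X_a$ along the unique tree geodesic, and the last $g$ steps must follow that geodesic. Conditioned on the walk up to time $a+s-g$, with $X_{a+s-g}$ at distance $g$ from $X_a$, the probability of the last $g$ steps following the geodesic is at most $p^{-g}$; this needs only one specific continuation in each of $g$ steps, each with probability $1/p$. Hence $\mathbb{E}[Y_a^s]\le p^{-g}\le p^{-\frac35\log_p n}=n^{-3/5}$, using $g=\lceil\frac35\log_p n\rceil$. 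In fact the same bound works just using that the last $g$ steps must be a specific sequence: the path of length $g$ ending at $X_a$ with a given starting point is unique in the tree ball. The only point requiring care is that the conditioning is valid, i.e.\ that uniqueness of the geodesic uses only girth $>2g$.

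\emph{The case $s>4\log_p n$.} This is where the spectral input (LPS1) enters, and I expect it to be the main obstacle. I would use the Alon--Benjamini--Lubetzky--Sodin analysis: the NBW transition from a directed edge after $t$ steps is expressed via the non-backtracking polynomials $q_t(A)$ of the adjacency matrix (Chebyshev-type). These satisfy $|q_t(\lambda)|\le (t+1)p^{t/2}$ for $|\lambda|\le 2\sqrt p$. This gives
\[
\bigl|\Pr[X_{a+s}=v]-\pi_s(v)\bigr|\le \frac{C(s+1)}{p^{s/2}},
\]
where $\pi_s$ is the limiting distribution. Because $G$ is bipartite, $\pi_s$ is uniform on one side, so it is at most $2/n$. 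For $s>4\log_p n$ the error term is at most $(s+1)n^{-2}\cdot C\le 1/n$ for large $n$, giving the bound $3/n$. The delicate points are handling the bipartite trivial eigenvalue $-(p+1)$, which yields the factor $2/n$ rather than $1/n$, and converting the vertex-start bound to a directed-edge start. For the latter, I would average over the first step, or apply the ABLS formula directly for walks started from an edge, absorbing constants into the slack between $2/n+o(1/n)$ and $3/n$.
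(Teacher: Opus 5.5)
Your first two cases are the paper's argument. Girth rules out returns of length at most $2g$. For $s>2g$ the last $g$ steps must trace the unique length-$g$ path into $X_a$, at cost $p^{-g}\le n^{-3/5}$.

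For $s>4\log_p n$ your route is genuinely different, and it is correct once two details are fixed.

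\textbf{The paper's route.} It conditions on the directed edge $(X_a,X_{a+1})$ and imports the Lubetzky--Peres $L^2$ bound $Np^{-t}(4pt^2+1)$ for the directed-edge chain on bipartite Ramanujan graphs. That estimate is already stated for edge starts, so it gives $2/n+1/n$ at once via total variation. Its cost is that it rests on the block decomposition of the non-normal operator $\vec P_G$.

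\textbf{Your route.} You stay at the vertex level with the ABLS polynomials $q_t(A)$. This needs only the spectral theorem for the symmetric matrix $A$ and the Chebyshev bound $|q_t(\lambda)|=p^{t/2}\,|U_t-U_{t-2}/p|\le(1+1/p)(t+1)p^{t/2}$ for $|\lambda|\le 2\sqrt p$, with $U_t$ evaluated at $\lambda/(2\sqrt p)$. Together with Cauchy--Schwarz over an orthonormal eigenbasis, this gives $|\Pr_u[X_t=v]-\pi_t(v)|\le (t+1)p^{-t/2}$. Parity of $q_t$ correctly produces the $2/n$ main term in the bipartite case. The approach is more elementary and self-contained than the paper's.

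\textbf{The transfer to edge starts.} This is the price of your route, and your first suggestion works once made precise. Conditioned on $X_1\ne x$, the walk from $y$ is exactly the walk from the edge $(x,y)$. Hence $\Pr_{(x,y)}[E]\le\frac{p+1}{p}\Pr_y[E]$. Since $p\ge5$, this yields a bound of at most $\frac{12}{5n}+o(1/n)<\frac3n$, which is the slack you anticipated.

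\textbf{Two repairs.}
\begin{enumerate}
\item Bound the error term using that $(s+1)p^{-s/2}$ is decreasing in $s$. The intermediate bound $C(s+1)n^{-2}\le 1/n$ that you wrote fails once $s\gg n$, and the claim is stated for all large $s$.
\item For $a=0$ no conditioning is needed, because the walk is already a vertex start from $u$. Your reduction to ``return to $y$ after $s$ steps'' is misindexed in that case.
\end{enumerate}
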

\begin{proof}
If $s\leq 2g$ then $\Exp(Y_a^s)=0$, since $G$ has girth greater than $2g$.

Suppose now that $s>2g$ and let $v=X_a$.  Consider the ball $B_g(v)$ of radius~$g$ centred at vertex~$v$.  By (LPS2), the induced subgraph $G[B_g(v)]$ is a $\Delta$-regular tree $\mathbb T$ of depth~$g$.  The only way that the event $X_{a+s}=v$ can occur is if $X_{a+s-g}$ is a leaf of $\mathbb T$ and the next $g$~steps of the NBW are all towards~$v$.  (If at any time-step the walk moves away from $v$ it must continue moving away from~$v$ until it again reaches a leaf.)  The probability that the walk makes $g$ consecutive steps towards~$v$ is $p^{-g}\leq n^{-3/5}$.

Finally we consider the case
$
s\geq 
\tau:=\left\lceil4\log_p n\right\rceil.
$
Recall the Markov chain $\vec P_G$ on the state space $\vec E(G)$ of directed edges of $G$ from the introduction. We use a mixing estimate for this chain on bipartite Ramanujan graphs $G$ from
\cite{LubetzkyPeres}. Let
$
N:=|\vec E(G)|=n(p+1).
$
For a directed edge $e$, let $\mu_t^e$ denote the law after $t$
steps of $\vec P_G$ starting from $e$,
and let $\pi_{e,t}$ denote the uniform distribution on the $N/2$
directed edges in the parity class reachable from $e$ at time $t$.
The final estimate in the proof of \cite[Corollary~3.8]{LubetzkyPeres} gives
\begin{align}\label{eq:LP-Bipartite}
\left\|
\frac{\mu_t^e}{\pi_{e,t}}-1
\right\|_{L^2(\pi_{e,t})}^2
\le
Np^{-t}\left(4pt^2+1\right)
\end{align}
for every directed edge $e$ and every $t\ge1$.

For all sufficiently large $n$, Cauchy--Schwarz and the estimate~\eqref{eq:LP-Bipartite} imply that
\[
\|\mu_t^e - \pi_{e,t}\|_{\textrm{TV}}
\leq
\left\|
\frac{\mu_t^e}{\pi_{e,t}}-1
\right\|_{L^2(\pi_{e,t})}
\le
\frac1n
\]
for every $e\in\vec E(G)$ and every $t\ge\tau-1$.

For $v\in V(G)$, let
\[
F_v:=\{(w,v):w\sim v\}
\]
be the set of directed edges ending at $v$. Whenever $F_v$ belongs
to the parity class supporting $\pi_{e,t}$, we have
\[
\pi_{e,t}(F_v)
=
\frac{|F_v|}{N/2}
=
\frac{p+1}{n(p+1)/2}
=
\frac2n.
\]
We thus have
\[
\mu_t^e(F_v)
\le
\pi_{e,t}(F_v)
+
\|\mu_t^e - \pi_{e,t}\|_{\textrm{TV}}
\le
\frac3n.
\]
If $F_v$ is not in the reachable parity class, then $\mu_t^e(F_v)=0$.

It follows that, for every $a\ge0$ and $s\ge\tau$, conditioning on
the directed edge $(X_{a},X_{a+1})$
gives
\[
\Pr(X_{a+s}=X_a\mid X_{a},X_{a+1})
\le
\frac3n
\]
and so
\[
\mathbb{E}\left[Y_a^s\right]=
\Pr(X_{a+s}=X_a)\le\frac3n,
\]
finishing the proof of the claim.
\end{proof}

To conclude, we note that by linearity of expectation, the expected number of self-intersections of the random walk by any time $t_1$ is given by
\begin{align*}
\sum_{a=0}^{t_1-1}\sum_{s=1}^{t_1-a}\Exp(Y_a^s)
&\leq t_1\Big[ 2g\times0+4\log_pn\times n^{-3/5}+t_1\times \frac3n\Big]\\
&=\frac{3t_1^2}n+4\,t_1n^{-3/5}\log_pn.
\end{align*}
Setting $t_1=\frac13\sqrt n$, the last expression is bounded above by $\frac12$ for sufficiently large $n$.  Thus, the expected time to the first self intersection is at least $\frac16\sqrt n$. (To see this, let~$T$ be the first self intersection time. Then
$\Pr(T \leq t_1)$ is at most the expected number of  self-intersections up to time~$t_1$ which is at most~$1/2$. So $\Exp(T) = \sum_{t\geq0} \Pr(T>t) \geq \sum_{t=0}^{t_1-1} \Pr(T>t) \geq\sum_{t=0}^{t_1-1} \Pr(T>t_1) \geq t_1/2$.) 
\end{proof}

\section{An improved upper bound for expanders}

In this section, we show that in the case of regular graphs with a uniform spectral gap, we can improve the bound on the expected self-intersection time of non-backtracking random walks to $O(\sqrt{n})$. The previous section shows that this bound is best possible for this class of graphs. The result is a simple consequence of the birthday paradox for Markov chains due to Kim, Montenegro, Peres, and Tetali~\cite[Theorem 3.6]{kim2008birthday} along with results of Lubetzky and Peres on the mixing time of non-backtracking random walks~\cite{LubetzkyPeres}.

Recall that given a $\Delta$-regular graph $G$, we let
\[
\lambda(G):=\max \{|\lambda|: \lambda\in \textrm{Spec}(A_G), \lambda\notin\{\Delta, -\Delta\}\}
\]
denote the largest absolute, nontrivial eigenvalue of $G$. We will assume that $\lambda(G) \leq \Delta-\eps$ for some uniform $\eps>0$.

\begin{comment}
We first note that the non-backtracking walk is not a Markov chain when viewed only on vertices, since the law of the next step depends on the previous vertex. However, as observed by... it is a Markov chain on directed edges. 
Indeed, let
\[
\vec E(G):=\{(x,y):\{x,y\}\in E(G)\}
\]
be the set of directed edges, so that
\(
N:=|\vec E(G)|=nd.
\)
For \(t\ge 1\), define
\[
Y_t:=(X_{t-1},X_t).
\]
Then \((Y_t)_{t\ge 1}\) is a Markov chain on \(\vec E(G)\), with transition
matrix
\[
\vec P_G((x,y),(y,z))
=
\begin{cases}
\dfrac1{d-1}, & z\sim y,\ z\neq x,\\[4pt]
0, & \text{otherwise}.
\end{cases}
\]
Since \(G\) is \(d\)-regular, \(\vec P_G\) is doubly stochastic, hence its stationary
distribution is uniform on \(\vec E(G)\). Moreover, since \(G\) is connected,
non-bipartite, and \(d\ge 3\), this directed-edge chain is ergodic.
\end{comment}

We again use the Markov chain $\vec P_G$ on the state space $\vec E(G)$ of directed edges from the introduction. Recall that $N:=|\vec E(G)|=n\Delta$.
Before turning to the proof of Theorem~\ref{thm:UBexpander}, we need two preliminary lemmas establishing rapid mixing for $\vec P_G$ treating the cases where $G$ is bipartite/non-bipartite separately. We note that \(\vec P_G\) is not normal in general and so it is not sufficient to simply bound the eigenvalues of \(\vec P_G\). Instead we use a spectral decomposition of  $\vec P_G$ due to Lubetzky and Peres~\cite{LubetzkyPeres}.

\begin{lemma}\label{lem-LP-mix}
Fix \(\Delta\ge 3\) and \(\varepsilon>0\). There exist constants
\(\rho_0=\rho_0(\Delta,\epsilon)<1\) and \(C_0=C_0(\Delta,\varepsilon)\) such that the following holds. Let \(G\) be a connected non-bipartite \(\Delta\)-regular graph on \(n\) vertices. If
\[
\lambda(G)\le \Delta-\varepsilon\, ,
\] 
then for all $t\geq 1$ and all $e,f\in \vec E(G)$
\[
\left|\vec P_G^t(e,f)-\frac1N\right|
\le
C_0\rho_0^t\,.
\]
\end{lemma}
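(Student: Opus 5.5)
We will use the Lubetzky--Peres spectral decomposition of the non-backtracking operator $\vec P_G$, and bound each piece separately.

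\emph{The decomposition.} Write $d=\Delta-1$. The operator $B=d\,\vec P_G$ is the Hashimoto non-backtracking matrix. By the Ihara--Bass relation, its spectrum is determined by that of $A_G$. Each adjacency eigenvalue $\lambda$ produces the two roots $\theta_\pm$ of
\[
\theta^2-\lambda\theta+d=0,
\]
and there are $N-2n$ further eigenvalues equal to $\pm1$. Lubetzky and Peres show more: $\vec E(G)$ admits a decomposition into $B$-invariant subspaces that are orthogonal to one another. Each block has dimension at most $2$, is attached to one adjacency eigenvector (or lies in the $\pm1$ part), and the blocks are uniformly well conditioned. Concretely, there is a unitary $U$ with $U^*\vec P_G U$ block diagonal. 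Each block is a $2\times2$ matrix $M_\lambda/d$, whose entries are bounded in terms of $\Delta$ alone, or a scalar $\pm1/d$. The trivial eigenvalue $\lambda=\Delta$ gives the block containing the constant vector, with eigenvalues $\theta=d$ and $\theta=1$. After dividing by $d$, the constant vector carries eigenvalue $1$; this is the projection $J/N$.

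\emph{Bounding the blocks.} Next we bound $\|(M_\lambda/d)^t\|$ uniformly in $\lambda$. If $|\lambda|\le 2\sqrt d$, then $|\theta_\pm|=\sqrt d$. A $2\times2$ block, possibly a Jordan block when $\theta_+=\theta_-$, then satisfies $\|M_\lambda^t\|\le C(t+1)d^{t/2}$. If $2\sqrt d<|\lambda|\le\Delta-\varepsilon$, the roots are real with $|\theta_+|\le \theta(\Delta-\varepsilon)<d$, since $\theta(\Delta)=d$ and $\theta$ is increasing in $|\lambda|$. Also $\theta_+\theta_-=d$, so $|\theta_-|\le\sqrt d$. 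These two roots are separated by a gap bounded below, so the block is diagonalizable with bounded condition number, and $\|M_\lambda^t\|\le C\,\theta(\Delta-\varepsilon)^t$. Non-bipartiteness matters exactly here: $-\Delta$ is not an eigenvalue, so no block has $\theta=-d$. The remaining part of the trivial block and the $\pm1$ blocks contribute at most $d^{-t}$. Set
\[
\rho_0=\max\Bigl\{\theta(\Delta-\varepsilon)/d,\ d^{-1/2+\delta'}\Bigr\}<1,
\]
with the polynomial factor $t+1$ absorbed into the second term. Then
\[
\Bigl\|\vec P_G^t-\tfrac1N J\Bigr\|_{2\to2}\le C_0\rho_0^t.
\]
Finally, $|\vec P_G^t(e,f)-1/N|=|\langle \mathbf 1_e,(\vec P_G^t-J/N)\mathbf 1_f\rangle|$ is at most the operator norm, which gives the entrywise bound.

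\emph{Main obstacle.} The key difficulty is that $\vec P_G$ is non-normal, so the eigenvalue bounds alone are not enough. We need the Lubetzky--Peres fact that the $2\times2$ blocks can be taken with an orthonormal basis across blocks and with block entries bounded independently of $n$. We also need the uniform treatment of blocks near the degenerate case $\lambda=\pm2\sqrt d$, where a Jordan block appears. Handling that case is exactly why we lose the factor $t+1$ and enlarge $\rho_0$ slightly above $d^{-1/2}$.
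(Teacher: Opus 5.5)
Your route is the paper's: the Lubetzky--Peres unitary block decomposition, a bound on powers of each block, and the estimate $\bigl|\vec P_G^t(e,f)-\frac1N\bigr|\le\bigl\|\vec P_G^t-\frac1N J\bigr\|_{2\to2}$. However, one step fails as written: your treatment of the real-root regime $2\sqrt d<|\lambda|\le\Delta-\varepsilon$. There the roots differ by $\sqrt{\lambda^2-4d}$, which tends to $0$ as $|\lambda|\downarrow 2\sqrt d$. A graph in the class may have adjacency eigenvalues arbitrarily close to $2\sqrt d$ from above, so the gap is not bounded below. As such a block approaches the Jordan block you mention at $\lambda=\pm2\sqrt d$, the condition number of its eigenbasis blows up. Hence ``diagonalizable with bounded condition number'' does not give a constant depending only on $\Delta$ and $\varepsilon$. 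Relatedly, Lubetzky--Peres do not supply well-conditioned eigenbases for the blocks. They supply a unitary $U$ together with upper-triangular blocks whose off-diagonal entries satisfy $|\alpha_i|<2d$, and that is the structure you should use.

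The fix, which is the paper's actual argument, is to use that triangular form directly for all nontrivial blocks at once. Set $r:=\max\{\sqrt d,\theta(\Delta-\varepsilon)\}/d<1$, so that $|\theta_i|/d,\ |\theta_i'|/d\le r$ for all $i\ge 2$. Then the $t$-th power of $\frac1d\begin{pmatrix}\theta_i&\alpha_i\\0&\theta_i'\end{pmatrix}$ has diagonal entries of modulus at most $r^t$. Its off-diagonal entry is $\frac{\alpha_i}{d^t}\sum_{k=0}^{t-1}\theta_i^{t-1-k}(\theta_i')^k$, of modulus at most $2t\,r^{t-1}$. Bounding the operator norm by the Frobenius norm gives $\sqrt2\,r^t+2tr^{t-1}\le C_0\rho_0^t$ for any $\rho_0\in(r,1)$. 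This treats complex roots, real roots and the degenerate case uniformly, with no case split and no diagonalization. The cost is a factor $t$ in the real-root regime as well, so $\rho_0$ must be taken strictly above $\theta(\Delta-\varepsilon)/d$. Alternatively, you could subdivide the real-root range: use the triangular bound where $|\theta_+|$ is close to $\sqrt d$, and your separation argument only where the gap is bounded below.
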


\begin{proof}
Lubetzky and Peres~\cite[Proposition 3.1]{LubetzkyPeres} showed that $\vec P_G$ is unitarily similar to a block-diagonal matrix with $1\times1$ and $2\times2$ blocks on the diagonal.
More precisely, they show there is a unitary matrix \(U\)
such that 
\[
U^\ast \vec P_G U
=\frac{1}{\Delta-1}\cdot 
\operatorname{diag}\left(
\Delta-1,\,
\begin{pmatrix}
\theta_2 & \alpha_2\\
0 & \theta_2'
\end{pmatrix},
\,\ldots,\,
\begin{pmatrix}
\theta_n & \alpha_n\\
0 & \theta_n'
\end{pmatrix},
\,-1,\ldots,-1,\,
1,\ldots,1
\right)
\]
where \(|\alpha_i|<2(\Delta-1)\) and
\(\theta_i,\theta_i'\) are the roots of
\[
\theta^2-\lambda_i\theta+\Delta-1=0,
\]
where $\lambda_1\geq\ldots\geq\lambda_n$
are the eigenvalues of $G$. The leading one-dimensional block corresponds to the constant
vector.

By our assumption that
\(
\lambda(G)\le \Delta-\varepsilon,
\)
there is a constant
\(r=r(\Delta,\varepsilon)<1\) such that
\[
\frac{|\theta_i|}{\Delta-1}\le r,
\qquad
\frac{|\theta_i'|}{\Delta-1}\le r
\]
for all $i\in \{2,\ldots,n\}$. 
Let 
\(
M_i=(\Delta-1)^{-1}\begin{pmatrix}
\theta_i & \alpha_i\\
0 & \theta_i'
\end{pmatrix}
\). For \(t\ge1\),
\[
M_i^t
=
\frac1{(\Delta-1)^t}
\begin{pmatrix}
\theta_i^t &
\alpha_i\sum_{k=0}^{t-1}
\theta_i^{\,t-1-k}(\theta_i')^k\\
0 & (\theta_i')^t
\end{pmatrix}.
\]
The diagonal entries satisfy
\[
\left|\frac{\theta_i}{\Delta-1}\right|^t\le r^t,
\qquad
\left|\frac{\theta_i'}{\Delta-1}\right|^t\le r^t.
\]
For the upper-right entry,
\begin{align*}
\left|
\frac{\alpha_i}{(\Delta-1)^t}
\sum_{k=0}^{t-1}
\theta_i^{\,t-1-k}(\theta_i')^k
\right|
\le
\frac{|\alpha_i|}{(\Delta-1)^t}
\sum_{k=0}^{t-1}
|\theta_i|^{t-1-k}|\theta_i'|^k
\le
\frac{|\alpha_i|}{(\Delta-1)^t}\,t((\Delta-1)r)^{t-1}
\le
2t r^{t-1}.
\end{align*}

Bounding the operator norm by the Frobenius norm (the $L^2$-norm of the vector of all the entries) we therefore have
\[
\|M_i^t\|_{2\to2}
\le
\left(2r^{2t}+4t^2r^{2t-2}\right)^{1/2}
\le
\sqrt2\,r^t+2t r^{t-1}.
\]
In particular, choosing \(\rho_0\in (r,1)\) we have 
\[
\|M_i^t\|_{2\to2}
\le C_0\rho_0^t
\]
for some constant \(C_0=C_0(\Delta,\epsilon)\).

Now let \(\Pi\) denote the orthogonal projection onto the constant vector in
\(\mathbb C^{\vec E}\).
Since the stationary distribution of the directed-edge chain is uniform, we have
\begin{align}\label{eq:P-mix-bound}
\left|\vec P_G^t(e,f)-\frac1N\right|\leq\|\vec{P}_G^t-\Pi\|_{2\to2}
=
\max\left\{
\max_i\|M_i^t\|_{2\to2},
(\Delta-1)^{-t}
\right\}
\le
C_0\rho_0^t
\end{align}
for every \(e,f\in\vec E\)
after increasing
\(\rho_0\) if necessary so that \(\rho_0>(\Delta-1)^{-1}\).
\end{proof}

We now turn to the bipartite case. 

\begin{lemma}\label{lem-LP-mix-bipartite}
Fix $\Delta\ge3$ and $\varepsilon>0$. There exist constants
$
\rho_0=\rho_0(\Delta,\varepsilon)<1
$ 
and
$
C_0=C_0(\Delta,\varepsilon)
$
such that the following holds. Let $G$ be a connected bipartite
$\Delta$-regular graph on $n$ vertices with
\[
\lambda(G)
\le
\Delta-\varepsilon.
\]
For a directed edge $e\in\vec E(G)$, let $\pi_{e,t}$ denote the uniform distribution on the
$N/2$ directed edges in the parity class reachable from $e$ at
time $t$. Then, for every $t\ge1$ and all
$e,f\in\vec E(G)$,
\[
\left|
\vec P_G^t(e,f)-\pi_{e,t}(f)
\right|
\le
C_0\rho_0^t.
\]
\end{lemma}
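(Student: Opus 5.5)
We follow the proof of Lemma~\ref{lem-LP-mix} almost verbatim, using the same unitary block decomposition of Lubetzky and Peres~\cite[Proposition 3.1]{LubetzkyPeres}, which holds for any $\Delta$-regular graph. The only new point is that bipartiteness produces a second trivial eigenvalue. Concretely, the adjacency spectrum now contains both $\lambda_1=\Delta$ and $\lambda_n=-\Delta$. The block belonging to $\lambda_n=-\Delta$ has roots of $\theta^2+\Delta\theta+\Delta-1=0$, namely $\theta\in\{-1,-(\Delta-1)\}$. So after normalising by $(\Delta-1)^{-1}$ it contributes a unimodular eigenvalue $-1$, in addition to the eigenvalue $1$ from $\lambda_1$. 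Lubetzky and Peres identify the corresponding invariant vectors: the constant vector $\mathbf 1$ and the signed vector $\sigma$ that equals $+1$ on directed edges pointing from one side of the bipartition to the other and $-1$ on the rest. Indeed $\vec P_G\sigma=-\sigma$, since every step of the walk reverses the orientation class.

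The plan has four steps.

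\textbf{Step 1 (spectral bound).} All the other $2\times 2$ blocks $M_i$, for $2\le i\le n-1$, satisfy $|\lambda_i|\le \lambda(G)\le \Delta-\varepsilon$. The estimate from Lemma~\ref{lem-LP-mix} therefore applies unchanged and gives $\|M_i^t\|_{2\to2}\le C_0\rho_0^t$. The remaining $1\times1$ blocks $\pm1/(\Delta-1)$ decay like $(\Delta-1)^{-t}$. The $\lambda_n$ block itself needs care. One option is to note that it is still upper triangular, with diagonal entries $-1$ and $-1/(\Delta-1)$. The cleaner route, which we would try first, is to use that $\mathbf 1$ and $\sigma$ are orthogonal eigenvectors of both $\vec P_G$ and $\vec P_G^{*}$ (the latter because $\vec P_G$ is doubly stochastic and the reversal symmetry $(x,y)\mapsto(y,x)$ intertwines $\vec P_G$ with $\vec P_G^{*}$). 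Hence their span $W$ is reducing, and $W^\perp$ is invariant. Let $\Pi_2$ be the orthogonal projection onto $W$. On $W^\perp$ the operator has spectral radius at most $\max(r,(\Delta-1)^{-1})$. Choosing the Schur basis so that $\mathbf 1$ and $\sigma$ come first, the blocks on $W^\perp$ are of the same bounded triangular form, with off-diagonal entries bounded by $2$. This gives
\[
\|\vec P_G^t-\vec P_G^t\Pi_2\|_{2\to2}\le C_0\rho_0^t .
\]

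\textbf{Step 2 (identify the limit).} Compute the entries of
\[
\vec P_G^t\Pi_2=\frac1N\Bigl(\mathbf 1\mathbf 1^{T}+(-1)^t\sigma\sigma^{T}\Bigr).
\]
Its $(e,f)$ entry is $\frac1N\bigl(1+(-1)^t\sigma(e)\sigma(f)\bigr)$. This equals $2/N$ exactly when $f$ lies in the parity class reachable from $e$ in $t$ steps, and $0$ otherwise. That is precisely $\pi_{e,t}(f)$.

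\textbf{Step 3 (conclude).} Bound each entry by the operator norm, as in~\eqref{eq:P-mix-bound}, to obtain the stated estimate.

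The main obstacle is Step 1: making sure the $\lambda_n=-\Delta$ block splits off cleanly rather than leaving a nonzero off-diagonal $\alpha_n$ that couples the eigenvalue $-1$ to the decaying part. If the reducing-subspace argument is awkward, the fallback is to bound that triangular block directly. Its $t$-th power has diagonal entries $(-1)^t$ and $(-1/(\Delta-1))^t$, and its off-diagonal entry is a geometric sum that converges to a constant times $(-1)^t$. Hence
\[
M_n^t=(-1)^tE+O\bigl((\Delta-1)^{-t}\bigr)
\]
for a fixed rank-one matrix $E$. Comparing with the exact invariant vector $\sigma$ shows that $U\bigl((-1)^tE\bigr)U^*$ is the $\sigma\sigma^T/N$ term, since the limit of $(-\vec P_G)^{2t}$ restricted to this block must be the projection onto $\sigma$ along the other invariant directions. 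Either way, the rest of the argument is routine.
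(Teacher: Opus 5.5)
Your proposal is correct and follows the paper's proof: the Lubetzky--Peres block decomposition, the same $2\times2$ block estimates as in Lemma~\ref{lem-LP-mix}, and the identification of the entries of $\frac1N(\mathbf 1\mathbf 1^{T}+(-1)^t\sigma\sigma^{T})$ (your $\sigma$ is the paper's $\eta$) with $\pi_{e,t}(f)$. The only difference is that the paper takes directly from Lubetzky--Peres the bipartite form in which $-(\Delta-1)$ sits in its own $1\times1$ block, whereas you justify this splitting yourself by noting that $\sigma$ is a common eigenvector of $\vec P_G$ and $\vec P_G^{*}$, so $\mathrm{span}(\mathbf 1,\sigma)$ is reducing; this is a valid and slightly more self-contained argument.
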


\begin{proof}
We follow the proof of Lemma~\ref{lem-LP-mix}, with the only
modification being that the bipartite directed-edge chain has an
additional non-decaying eigenvalue $-1$.

Fix a bipartition
$
V(G)=V_0\sqcup V_1.
$
Since $G$ is regular,
 each of the two parity classes of directed edges has
size $N/2$.

Define $\eta\in\mathbb R^{\vec E(G)}$ by
\[
\eta((x,y))
:=
\begin{cases}
1, & x\in V_0,\\
-1, & x\in V_1.
\end{cases}
\]
In the bipartite case, the unitary block decomposition of Lubetzky and
Peres~\cite[Proposition~3.1]{LubetzkyPeres} takes the form
\[
U^\ast\vec P_GU
=
\frac1{\Delta-1}
\operatorname{diag}\left(
\Delta-1,\,
-(\Delta-1),\,
\begin{pmatrix}
\theta_2&\alpha_2\\
0&\theta_2'
\end{pmatrix},
\,\ldots,\,
\begin{pmatrix}
\theta_{n-1}&\alpha_{n-1}\\
0&\theta_{n-1}'
\end{pmatrix},
\,-1,\ldots,-1,\,
1,\ldots,1
\right),
\]
where $|\alpha_i|<2(\Delta-1)$
and $\theta_i,\theta_i'$ are the roots of
$
\theta^2-\lambda_i\theta+\Delta-1=0.
$
The first one-dimensional block corresponds to the constant vector
$\mathbf 1$, while the second corresponds to $\eta$. 

The assumption $\lambda(G)\le\Delta-\varepsilon$ allows us to apply the same calculation as in the proof of
Lemma~\ref{lem-LP-mix}. As before, letting 
\[
M_i
:=
\frac1{\Delta-1}
\begin{pmatrix}
\theta_i&\alpha_i\\
0&\theta_i'
\end{pmatrix},
\]
there are constants $C_0, \rho_0$ such that 
$
\|M_i^t\|_{2\to2}
\le
C_0\rho_0^t
$
for all $i\in\{2,\ldots,n-1\}$ and all $t\ge1$. The remaining
one-dimensional blocks, whose eigenvalues have modulus
$(\Delta-1)^{-1}$, satisfy the same bound.

Let
\[
\Pi_+
:=
\frac1N\mathbf 1\mathbf 1^{\mathsf T}
\qquad\text{and}\qquad
\Pi_-
:=
\frac1N\eta\eta^{\mathsf T}
\]
be the orthogonal projections onto the eigenspaces corresponding to
the eigenvalues $1$ and $-1$, respectively. The preceding block
bounds give
\begin{equation}\label{eq:P-bipartite-mix-bound}
\left\|
\vec P_G^t-\Pi_+-(-1)^t\Pi_-
\right\|_{2\to2}
\le
C_0\rho_0^t.
\end{equation}

For $e,f\in\vec E(G)$,
\[
\begin{aligned}
\left(\Pi_++(-1)^t\Pi_-\right)(e,f)
&=
\frac{1+(-1)^t\eta(e)\eta(f)}{N}.
\end{aligned}
\]
This quantity is $2/N$ when $f$ lies in the parity class reachable
from $e$ at time $t$, and it is zero otherwise. In other words, 
$
\left(\Pi_++(-1)^t\Pi_-\right)(e,f)
=
\pi_{e,t}(f).
$
It now follows from \eqref{eq:P-bipartite-mix-bound} that
\[
\left|
\vec P_G^t(e,f)-\pi_{e,t}(f)
\right|
\le
\left\|
\vec P_G^t-\Pi_+-(-1)^t\Pi_-
\right\|_{2\to2}
\le
C_0\rho_0^t,
\]
as desired.
\end{proof}

We now restate and prove the main result of this section. 

\thmUBexpander* 

\begin{proof}
Let $(X_t)_{t\ge 0}$ be the NBW started
from $X_0=u$. Consider the corresponding directed-edge chain $(Y_t)_{t\ge 1}$ where $Y_t=(X_{t-1},X_t)$. 

We deal with the cases where $G$ is non-biparite/bipartite separately, starting with the case where $G$ is non-bipartite. 
Let
\[
\vec{T}
:=
\min\{t\ge 2:\exists\,1\le s<t\text{ with }Y_s=Y_t\}
\]
be the first self-intersection time of the directed-edge chain. 
If
\(Y_s=Y_t\), then in particular \(X_s=X_t\). Therefore
\[
T(G,u)\le \vec{T}.
\]
Thus it is enough to prove
\[
\mathbb E[\vec{T}]=O_{\Delta,\varepsilon}(\sqrt n).
\]

Let $C_0,\rho_0$ be as in Lemma~\ref{lem-LP-mix} and  
choose
\(
t_0:=\lceil K\log N\rceil
\)
with \(K=K(\Delta,\varepsilon)\) sufficiently large that
\[
C_0\rho_0^{t_0}\le \frac1{2N}.
\]
Then, for every pair \(e,f\in\vec E\),
\[
\frac1{2N}
\le
\vec P_G^{t_0}(e,f)
\le
\frac{3}{2N}.                   
\]

The birthday paradox for Markov chains \cite[Theorem 3.6]{kim2008birthday} now implies that there exist absolute constants $C_1,\delta>0$ such that 
\[
\vec{T}\leq
C_1\left(
\sqrt{
\left(
1+\sum_{j=1}^{2t_0}3j\max_{e,f} \vec{P}_G^j(e,f)
\right)N
}
+t_0
\right)
\]
with probability bounded below by an absolute constant  $\delta>0$.

By~\eqref{eq:P-mix-bound},
\[
\max_{e,f\in\vec E}\vec P_G^j(e,f)
\le
\frac1N+C_0\rho_0^j.
\]
Therefore
\[
\begin{aligned}
1+\sum_{j=1}^{2t_0}3j\max_{e,f}\vec P_G^j(e,f)
&\le
1+\frac3N\sum_{j=1}^{2t_0}j
+3C_0\sum_{j=1}^{2t_0}j\rho_0^j = O_{\Delta,\epsilon}(1) 
\end{aligned}
\]
since \(t_0=O(\log N)\) and $\rho_0<1$. 
It follows that there exists $L=O_{\Delta,\epsilon}(\sqrt{N})$
such that for every
starting directed edge \(e\),
\begin{align}\label{eq:tau-L-bound}
\mathbb P_e(\vec{T}\le L)\ge \delta.              
\end{align}

Finally, we convert this probability bound into an expectation bound. We note that by~\eqref{eq:tau-L-bound} and the Markov property we have
\[
\mathbb P(\vec{T}>(k+1)L\mid \vec{T}>kL)\le 1-\delta.
\]
Iterating gives
\[
\mathbb P(\vec{T}>kL)\le (1-\delta)^k.
\]
Therefore
\[
\mathbb E[\vec{T}]
=
\sum_{t\ge 0}\mathbb P(\vec{T}>t)
\le
L\sum_{k\ge 0}\mathbb P(\vec{T}>kL)
\le
L\sum_{k\ge 0}(1-\delta)^k
=
\frac{L}{\delta}
=
O_{\Delta,\varepsilon}(\sqrt n).
\]
Since \(T(G,u)\le \vec{T}\), this completes the proof in the case where $G$ is non-bipartite.

Suppose now that $G$ is bipartite. Write
$
V(G)=V_0\sqcup V_1
$
and let
\[
\vec E_i:=\{(x,y)\in\vec E(G):x\in V_i\}.
\]
Consider the two-step chain
\[
Q_i:=\vec P_G^2\big|_{\vec E_i}\, .
\]
By Lemma~\ref{lem-LP-mix-bipartite}, for every $e,f\in\vec E_i$,
\[
\left|
Q_i^t(e,f)-\frac2N
\right|
\le
C_0\rho_0^{2t}.
\]
Suppose wlog that $u\in V_0$ and set
\[
Z_k:=Y_{2k+1},
\qquad k\ge0,
\]
and let
\[
S:=\min\{k\ge1:\exists\,0\le\ell<k\text{ with }Z_\ell=Z_k\}.
\]
Since $Z_0=Y_1\in\vec E_0$, the birthday-paradox argument above,
applied to $Q_0$, gives
\[
\mathbb E[S]
=
O_{\Delta,\varepsilon}(\sqrt n).
\]
A self-intersection $Z_\ell=Z_k$ is also a self-intersection of the
full directed-edge chain, and hence
$
\vec T\le2S+1.
$
Therefore,
\[
\mathbb E[T(G,u)]
\le
\mathbb E[\vec T]
\le
2\mathbb E[S]+1
=
O_{\Delta,\varepsilon}(\sqrt n),
\]
which completes the bipartite case.

\end{proof}

\section{An application}\label{sec:appli}
The self-intersection time of the non-backtracking random walk (NBW) gives an upper bound on the spectral independence (SI) of the Ising model at critical temperature. 
Consequently, our bounds on the NBW self-intersection time imply improved SI bounds, and hence improved upper bounds on the mixing time of Glauber dynamics for critical Ising models. 

We introduce some definitions before stating our application.
Let $\mu$ be a distribution over $\{-1,+1\}^V$. 
Let $X \sim \mu$. The \emph{influence matrix} $\Psi_{\mu}\in\mathbb{R}^{V\times V}$ is defined as: 
\begin{align*}
\Psi_{\mu}(u,v)=\begin{cases}
\Pr(X_v=+1\mid X_u=+1)-\Pr(X_v=+1\mid X_u=-1) &\quad \text{if }\mathbb{E}(X_u)\in(-1,+1)\\
&\qquad \text{and }u\ne v;\\
0 &\quad \text{otherwise}.
\end{cases}
\end{align*}
For any vertex $u \in V$, the \emph{total influence} of $u$ in $\mu$ is defined as
\begin{align*}
\mathrm{TI}^{\mu}_{u} = 1+\sum_{v\in V}|\Psi_{\mu}(u,v)|.
\end{align*}

The total influence serves as an upper bound for the spectral independence (SI) which we define now.
Given any subset $\Lambda \subseteq V$, a partial assignment $\tau \in \{-1,+1\}^{\Lambda}$ specified on $\Lambda$ is called a \emph{feasible pinning} if it can be extended to a full assignment $\sigma \in\{-1,+1\}^V$ such that $\sigma_\Lambda=\tau$ and $\mu(\sigma)>0$.

We say that $\mu$ is \emph{$\rho$-spectrally independent} if for every feasible pinning $\tau$ on any subset $\Lambda$,
\begin{align*}
\lambda_{\max}(\Psi_{\mu^\tau}) \le \rho,
\end{align*}
where $\mu^{\tau}$ denotes the conditional distribution induced by $\mu$ given the boundary condition $\tau$, and $\lambda_{\max}$ is the largest eigenvalue of the matrix $\Psi_{\mu^\tau}$. Since the largest eigenvalue of the influence matrix is bounded above by its maximum absolute row sum, it follows that $\mu$ is $\rho$-spectrally independent if, for every $u\in V$ and every feasible pinning $\tau$, we have $\mathrm{TI}^{\mu^\tau}_{u}\le 1+\rho$.

Let $G=(V,E)$ be a graph. Let $\beta \in \mathbb{R}$ denote the inverse temperature, and let $\boldsymbol{h} \in \mathbb{R}^V$ denote the external field. The Gibbs distribution for the Ising model is given by:
\begin{align*}
\forall \boldsymbol{x} \in \{-1,+1\}^V, \qquad \mu(\boldsymbol{x}) \propto \exp\left(\frac{\beta}{2} \boldsymbol{x}^\intercal A_G \boldsymbol{x} + \boldsymbol{h}^\intercal \boldsymbol{x}\right),
\end{align*}
where $A_G$ is the adjacency matrix of $G$. When $\beta>0$, the Ising model is called \emph{ferromagnetic}, while it is called \emph{antiferromagnetic} when $\beta<0$.

For every $\Delta\ge 3$, let $\beta_c(\Delta)>0$ denote the tree-uniqueness threshold of the zero-field Ising model on the infinite $\Delta$-regular tree, defined by
\[
(\Delta-1)\tanh\beta_c(\Delta)=1.
\]

The Glauber dynamics is the canonical Markov chain for sampling from the Gibbs distribution $\mu$.
Given a current state $X$ in the support $\Omega(\mu)$ of $\mu$, the chain updates as follows:
\begin{itemize}
\item choose a vertex $v \in V$ uniformly at random;
\item resample $X_v$ according to the marginal distribution $\mu_v(\cdot \mid X(V \setminus {v}))$ at $v$.
\end{itemize}
It is well-known that the Glauber dynamics has stationary distribution $\mu$.
Let $(Z_t)_{t \geq 0}$ denote the trajectory of the Glauber dynamics.
The \emph{mixing time} of Glauber dynamics is defined as:
\begin{align*}
    T_{\mathrm{mix}} = \max_{x \in \Omega(\mu)}\min\left\{t \mid d_{\mathrm{TV}}(\Pr(Z_t = \cdot \mid Z_0 = x),\mu) \leq 1/4\right\},
\end{align*}
where, for two distributions $\nu$ and $\mu$ on a finite set $\Omega$, the \emph{total variation distance} is defined by
    $d_{\mathrm{TV}}(\nu,\mu) = \frac{1}{2} \sum_{x \in \Omega} \left|\nu(x) - \mu(x)\right|$.

\begin{theorem}\label{thm:critical-Ising}
Fix $\Delta \geq 3$ and let $G$ be a connected $n$-vertex $\Delta$-regular graph.
Let $\mu$ be the Gibbs distribution of the Ising model on $G$ at inverse temperature $\beta$ satisfying $|\beta|=\beta_c$, where
\[
(\Delta-1)\tanh\beta_c=1.
\]
Then, for every $\Lambda\subseteq V(G)$, every feasible pinning $\tau\in\{-1,+1\}^{\Lambda}$, and every $u\in V(G)\setminus\Lambda$, the total influence of $u$ in $\mu^\tau$ satisfies
\[
\mathrm{TI}^{\mu^\tau}_u\le \frac{\Delta}{\Delta-1}\mathbb{E}(T(G,u))=O_\Delta(\sqrt{n}\log n).
\]
In particular, the Ising model is $O_\Delta(\sqrt{n}\log n)$-spectrally independent.

Moreover, if the external field satisfies $\|\boldsymbol h\|_\infty \le n^{O(1)}$, then the Glauber dynamics for $\mu$ has mixing time 
$\widetilde{O}_\Delta\!\left(n^{2+ \frac{2}{\Delta - 2}}\right)$.
\end{theorem}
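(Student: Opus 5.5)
The plan is to follow the self-avoiding walk (SAW) tree approach of \cite{chen2020contraction,chen2025uniqueness}, with the expected NBW self-intersection time standing in for the size of the relevant tree. Fix $\Lambda$, $\tau$ and $u\notin\Lambda$.

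\emph{Step 1: reduction to a tree.} By Weitz's construction, the influence of $u$ in $\mu^\tau$ is realised on the SAW tree $T_{\mathrm{SAW}}(G,u)$. The pinning $\tau$ is imposed on copies of $\Lambda$. Cycle-closing leaves are fixed to $\pm1$ according to Weitz's ordering rule. In the resulting tree model, an influence $\Psi(u,v)$ is the sum over the copies of $v$ of the influence from the root to that copy. Along a path in a tree, influences multiply edge by edge. For the Ising model, each edge factor (a derivative of the tree recursion) has absolute value at most $\tanh|\beta|=1/(\Delta-1)$, whatever the external field and pinning; this is the standard bound used in \cite{chen2020contraction}. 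Pinned vertices and fixed cycle-closing leaves contribute $0$ and cut off their subtrees. Hence
\[
\mathrm{TI}^{\mu^\tau}_u\le\sum_{k\ge0}\#\{\text{SAWs of length }k\text{ from }u\}\,(\Delta-1)^{-k},
\]
where the paths counted are self-avoiding, plus the single vertex where a walk closes a cycle; the final step of the comparison can absorb that vertex.

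\emph{Step 2: matching with the NBW.} In a $\Delta$-regular graph, a fixed non-backtracking path of length $k\ge1$ from $u$ has NBW probability $\frac1\Delta(\Delta-1)^{-(k-1)}$. So $\sum_{\gamma}(\Delta-1)^{-|\gamma|}$ over self-avoiding $\gamma$ of length $k$ equals $\frac{\Delta}{\Delta-1}\Pr[X_0\dots X_k\text{ is self-avoiding}]$ for $k\ge1$; the $k=0$ term is $1$. Self-avoiding paths are non-backtracking, so this identity is exact. Summing over $k$ and using $\Pr[X_0\dots X_k \text{ self-avoiding}]=\Pr[T(G,u)>k]$ gives
\[
\mathrm{TI}\le 1+\frac{\Delta}{\Delta-1}\sum_{k\ge1}\Pr[T>k]\le\frac{\Delta}{\Delta-1}\Exp T(G,u),
\]
because $\Pr[T>0]=\Pr[T>1]=1$ and $1\le\frac{\Delta}{\Delta-1}$. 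Theorem~\ref{thm:UB} then gives $O_\Delta(\sqrt n\log n)$. Since $\lambda_{\max}(\Psi)$ is at most the maximum absolute row sum, spectral independence follows.

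\emph{Step 3: mixing time.} This step feeds $\eta=O(\sqrt n\log n)$ spectral independence into the critical-regime mixing framework of \cite{chen2025uniqueness}. At criticality, that framework combines spectral independence with the boundedness of the marginals (through $\|h\|_\infty\le n^{O(1)}$). It uses a field-dynamics/entropy-factorisation argument and yields a mixing time polynomial in $\eta$, with exponents controlled by $\Delta$. I would plug our $\eta$ into the corrected statement there. I would then check that an $\widetilde O(\sqrt n)$ SI bound produces $\widetilde O(n^{2+2/(\Delta-2)})$, in place of the $\widetilde O(n)$ SI bound that gave $n^{3+4/(\Delta-2)}$ (the exponents are roughly halved, consistent with SI entering squared).

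\emph{Main obstacle.} Step 1 is the delicate part. The hard points are justifying the per-edge contraction by $1/(\Delta-1)$ uniformly over arbitrary pinnings and fields, including the antiferromagnetic sign. One must also ensure that Weitz's cycle-closing leaves do not add influence beyond the boundary term absorbed above. This is where the earlier error in \cite{chen2025uniqueness} lay, so I would redo this bound carefully using the derivative of $x\mapsto$ the Ising tree recursion in log-ratio coordinates.
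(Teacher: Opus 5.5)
Your proposal is correct and follows the paper's proof essentially step for step. The ingredients match: free copies in the SAW tree, per-edge influence at most $\tanh|\beta|=1/(\Delta-1)$ for every field and pinning, and the exact matching $\sum_{\gamma}(\Delta-1)^{-|\gamma|}=\frac{\Delta}{\Delta-1}\Pr(T(G,u)>\ell)$ over self-avoiding $\gamma$ of length $\ell\ge1$, followed by the approximate-tensorization argument of \cite{chen2025uniqueness}. Cycle-closing leaves are fixed, so they are simply not counted and nothing needs to be absorbed.

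The one thing you leave as a consistency check is the final exponent. The paper computes it directly as $2\beta_c\Delta\int_0^1\min\bigl(\tfrac{\Delta}{\Delta-1}\cdot\tfrac{1}{1-(\Delta-1)\tanh(\beta_c t)},\,\eta\bigr)\,dt\le O_\Delta(1)+\bigl(2+\tfrac{4}{\Delta-2}\bigr)\log\eta$ with $\eta=O(\sqrt{n}\log n)$. So the tensorization constant is $\eta^{2+4/(\Delta-2)}$ and the mixing time is $\widetilde O(n\,\eta^{2+4/(\Delta-2)})$; this is not ``SI entering squared''. The condition $\|\boldsymbol h\|_\infty\le n^{O(1)}$ is used only to control $\log(1/\mu_{\min})$, not to bound marginals.
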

\begin{proof}
Fix $\Lambda$, $\tau$, and $u$ as in the theorem. 
Let $\mathcal T=T^\tau_{\mathrm{SAW}}(G,u)$ be the self-avoiding-walk (SAW) tree rooted at $u$, equipped with the boundary condition induced by $\tau$ and by the usual cycle-closing rule. 
For $v\in V(G)$, let $\mathcal C_v$ be the set of free copies of $v$ in $\mathcal T$.

By \cite[Lemma 13(2)]{chen2020contraction}, the SAW-tree representation preserves the influences from the root:
\[
\Psi_{\mu_G^\tau}(u,v)
 =\sum_{\widehat v\in\mathcal C_v}\Psi_{\mathcal T}(u,\widehat v),
\]
where $\Psi_{\mathcal T}$ denotes the influence in the induced Ising Gibbs measure on $\mathcal T$. If $\widehat v$ is at depth $\ell$, then repeated application of the tree chain rule in \cite[Lemma 15]{chen2020contraction} expresses $\Psi_{\mathcal T}(u,\widehat v)$ as the product of the $\ell$ influences along the root-to-$\widehat v$ path. By \cite[Lemma 16]{chen2020contraction}, the absolute value of each such edge influence has the form
\[
\frac{|1-\mathrm e^{4\beta}|R}
     {(\mathrm e^{2\beta}R+1)(R+\mathrm e^{2\beta})},
\]
for some marginal ratio $R\ge 0$.
We have
\[
\alpha:=\sup_{R\in[0,\infty]}
\frac{|1-\mathrm e^{4\beta}|R}
     {(\mathrm e^{2\beta}R+1)(R+\mathrm e^{2\beta})}
 =\frac{|\mathrm e^{2\beta}-1|}{\mathrm e^{2\beta}+1}=\tanh|\beta|.
\]
It follows from the triangle inequality that
\begin{align*}
\mathrm{TI}^{\mu^\tau}_u
&\le 1+\sum_{v\in V(G)\setminus\{u\}}
       \sum_{\widehat v\in\mathcal C_v}
       |\Psi_{\mathcal T}(u,\widehat v)|\\
&\le \sum_{\ell\ge0}N_G(u,\ell)\alpha^\ell,
\end{align*}
where $N_G(u,\ell)$ is the number of self-avoiding walks of length $\ell$ in $G$ starting from $u$. Here we used that each free node of $\mathcal T$ at depth $\ell$ corresponds to a self-avoiding walk of length $\ell$ in $G$.

At $|\beta|=\beta_c$, this gives
\[
\mathrm{TI}^{\mu^\tau}_u
 \le \sum_{\ell\ge0}\frac{N_G(u,\ell)}{(\Delta-1)^\ell}.
\]
For every $\ell\ge1$, all non-backtracking walks of length $\ell$ from $u$ have probability $1/[\Delta(\Delta-1)^{\ell-1}]$, and exactly $N_G(u,\ell)$ of them are self-avoiding. Therefore
\[
\Pr(T(G,u)>\ell)
 =\frac{N_G(u,\ell)}{\Delta(\Delta-1)^{\ell-1}}
 \qquad(\ell\ge1).
\]
Using $N_G(u,0)=1$ and the identity
$\mathbb E[T(G,u)]=\sum_{\ell\ge0}\Pr(T(G,u)>\ell)$, we obtain
\begin{align*}
\mathrm{TI}^{\mu^\tau}_u
&\le 1+\frac{\Delta}{\Delta-1}
       \sum_{\ell\ge1}\Pr(T(G,u)>\ell)\\
&=\frac{\Delta}{\Delta-1}\mathbb E[T(G,u)]
  -\frac{1}{\Delta-1}\\
&\le\frac{\Delta}{\Delta-1}\mathbb E[T(G,u)].
\end{align*}
By Theorem~\ref{thm:UB}, it follows that $\mathrm{TI}^{\mu^\tau}_u=O_\Delta(\sqrt{n}\log n)$.
Notice that the argument took a supremum over the marginal ratio $R\in[0,\infty]$ and therefore holds for arbitrary external fields and any feasible pinning.

It remains to derive the mixing-time bound.
By \cite[proof of the upper bound in Theorem 1.2]{chen2025uniqueness}, the distribution $\mu$ satisfies the approximate tensorization of entropy with constant 
\[
\exp\left(2\beta_c\Delta \int_0^{1} C(t)\,\mathrm{d} t\right),
\]
where $C:[0,1] \to \mathbb{R}_{\ge 0}$ is given by
    \begin{align*}
        \forall t \in [0,1],\quad C(t) = 
            \min\left(\frac{\Delta}{\Delta-1} \cdot \frac{1}{1-(\Delta-1) \tanh (\beta_c t)}, O_\Delta(\sqrt{n}\log n)\right),
    \end{align*}
It then follows from the same calculation as in  \cite{chen2025uniqueness} that 
\begin{align*}
    2\beta_c\Delta \int_0^{1} C(t)\,\mathrm{d} t
    &\le O_\Delta(1) + 2\beta_c\Delta \int_0^{1-1/\sqrt{n}\log n} C(t)\,\mathrm{d} t\\
    &=O_\Delta(1) + 2\beta_c\Delta \int_{1/\sqrt{n}\log n}^{1} C(1-t)\,\mathrm{d} t\\
    &=O_\Delta(1) + \frac{4\beta_c \Delta}{\Delta-2} \int_{1/\sqrt{n}\log n}^{1} \frac{1}{\exp (2 \beta_c t) - 1} \,\mathrm{d} t\\
    &\le O_\Delta(1) +\left(2+\frac{4}{\Delta-2}\right)\log(\sqrt{n}\log n).
\end{align*}
Therefore, the Gibbs distribution $\mu$ satisfies the approximate tensorization of entropy with constant ${O}_{\Delta}\left((\sqrt{n}\log n)^{2+\frac{4}{\Delta-2}}\right)=\widetilde{O}_{\Delta}\left(n^{1+\frac{2}{\Delta-2}}\right)$.
Consequently, the Glauber dynamics for $\mu$ has mixing time $\widetilde{O}\left(n^{2+\frac{2}{\Delta-2}}\right)$ for fixed $\Delta\ge 3$. 
Here, we use the assumption $\|\boldsymbol h\|_\infty\le n^{O(1)}$ only to guarantee that $\log(1/\mu_{\min})=n^{O(1)}$, which is required for converting the approximate tensorization of entropy into a worst-case mixing-time upper bound.
\end{proof}

The $\widetilde{O}\!\left(n^{2+\frac{2}{\Delta-2}}\right)$ mixing-time upper bound established in Theorem~\ref{thm:critical-Ising} improves the previous state-of-the-art bound $\widetilde{O}\!\left(n^{3+\frac{4}{\Delta-2}}\right)$ for critical Ising models due to~\cite{chen2025uniqueness}.
To the best of our knowledge, the $O(\sqrt n\log n)$ upper bound on the total influence, and hence on the spectral independence, is the first sublinear spectral-independence upper bound for critical spin systems of this type. 
Moreover, this bound is tight up to the logarithmic factor, as it is matched by an $\Omega(\sqrt n)$ lower bound on the spectral independence of critical Ising models~\cite[Theorem 5.2 and Corollary 5.3]{chen2025uniqueness} .

This motivates the following broader open problem. For an $n$-vertex $\Delta$-regular graph with $\Delta\ge3$, and for a two-spin system whose parameters lie on the critical boundary of the tree-uniqueness condition, is the spectral independence always bounded by $\widetilde O(\sqrt n)$?

\section*{Acknowledgments}
Part of this work was done at the Phase Transitions in Probability Workshop at CWI in April 2026. Ferenc Bencs is supported by the Netherlands Organisation of Scientific Research (NWO): VI.Veni.222.303. Matthew Jenssen is supported by UK Research and Innovation Future Leaders Fellowship MR/W007320/2. Mark Jerrum is partly supported by grant UKRI2771: Zeros, Algorithms, and Correlation for Graph Polynomials. G\'abor Pete is supported by the Hungarian National Research, Development and Innovation Office, NKFIH Highlight grant 152849. Guus Regts is supported by the Netherlands Organisation of Scientific Research (NWO): VI.Vidi.193.068. Yitong Yin is partially supported by the New Cornerstone Science Foundation.

\bibliographystyle{plain}
\bibliography{NBRW}

\end{document}